\titleformat{\section}[block]{\bfseries\filcenter}{}{1em}{}
\newtheorem{The}{Theorem}[section]
\newtheorem{Lem}[The]{Lemma}
\newtheorem{Cor}[The]{Corollary}
\newtheorem{Rem}[The]{Remark} 
\newtheorem{Def}[The]{Definition}
\newtheorem{Pro}[The]{Proposition}
\newcommand{\R}{{\mathbb R}}
\newcommand{\schwartz}{\mathscr{S}}
\newcommand{\tempered}{\mathscr{S}^{\prime}}
\newcommand{\fraclaplace}{(-\Delta)^s}
\newcommand{\fourier}{\mathcal{F}}
\newcommand{\ifourier}{\mathcal{F}^{-1}}
\newcommand{\dimens}{n}
\newcommand{\abs}[1]{\left\lvert #1 \right\rvert}
\newcommand{\aabs}[1]{\left\lVert #1 \right\rVert}
\DeclareMathOperator{\spt}{spt}
\begin{document}

\begin{center}
{\Huge Uniqueness for the fractional Calder\'on problem with quasilocal perturbations}
\vspace{3mm}

\large{Giovanni Covi}
\vspace{10mm}

\textbf{Abstract}
\end{center}

We study the fractional Schr\"odinger equation with quasilocal perturbations and show that the qualitative unique continuation and Runge approximation properties hold in the assumption of sufficient decay. Quantitative versions of both results are also obtained via a propagation of smallness analysis for the Caffarelli-Silvestre extension. The results are then used to show uniqueness in the inverse problem of retrieving a quasilocal perturbation from DN data under suitable geometric assumptions. Our work generalizes recent results regarding the locally perturbed fractional Calder\'on problem.

\hrulefill

\section{Introduction}\label{sec:intro}

Let $s\in(0,1)$, assume that $\Omega\subset\R^\dimens, n\geq 1$ is a bounded open set, and let  $\Omega_e:=\R^\dimens\setminus\overline{\Omega}$ be its exterior. Given any set $A\subset\mathbb R^n$ and $r\geq 0$, define $N(A,r):= \{   x\in\mathbb R^n : \mbox{dist}(x,A)\leq r\}$ to be the neighbourhood of $A$ of radius $r$. We study an inverse problem for the fractional Schr\"odinger equation with a quasilocal perturbation
\begin{equation}
\label{main-eq}
\begin{array}{rrl}
\fraclaplace u+\Psi u & =0 & \text{in} \ \Omega \\
u & =f & \ \text{in} \ \Omega_e
\end{array}.
\end{equation}
In formula \eqref{main-eq}, $\fraclaplace$ is the pseudo-differential operator ($\Psi$DO) defined by $\fraclaplace u:=\ifourier(\abs{\cdot}^{2s}\hat{u})$, while the perturbation $\Psi$ is \emph{quasilocal} in the sense that there exists a \emph{decay function} $\mu: \mathbb R^+ \rightarrow \mathbb R^+_0$ with $\lim_{r\rightarrow\infty}\mu(r) = 0$ such that for all $u\in H^s$ and $r>0$ one has $$ \|\Psi u\|_{H^{-s}(N(\spt u,r)_e)} \leq \mu(r)\|u\|_{H^s} $$
(see Section \ref{sec:pre} for the exact definitions). Observe that both $(-\Delta)^s$ and $\Psi$ are nonlocal operators in general. In order to ensure the well-posedness of the direct problem \eqref{main-eq}, we always assume that $0$ is not a Dirichlet eigenvalue of $\fraclaplace+\Psi$, that is
\begin{equation*}
\text{If} \ u\in H^s(\R^\dimens) \ \text{solves} \ \fraclaplace u+\Psi u=0 \ \text{in} \ \Omega \ \text{and} \ u|_{\Omega_e}=0, \ \text{then} \ u\equiv 0.
\end{equation*} The goal of the inverse problem is to recover the perturbation $\Psi$ from the relative Dirichlet-to-Neumann map. This is the map $\Lambda_\Psi\colon H^s(\Omega_e)\rightarrow (H^s(\Omega_e))^*$ which associates each exterior datum $f$ to the corresponding nonlocal Neumann boundary value (see Section \ref{sec:wellposedness-DN}). In this article we are particularly concerned with the problem of uniqueness: given two quasilocal perturbations $\Psi_1, \Psi_2$, we wonder whether the fact that their DN maps $\Lambda_{\Psi_1}, \Lambda_{\Psi_2}$ coincide is enough to conclude that $\Psi_1 = \Psi_2$. 
\vspace{3mm}

We start our analysis from an easier case, that of \emph{perturbations of finite propagation}. These are special quasilocal perturbations $\Psi$ for which there exists a real number $R\geq 0$, the so called \emph{propagation of $\Psi$}, such that $$\mbox{spt}(\Psi u) \subseteq N(\spt u,R) \quad\mbox{for all}\quad u\in H^s .$$

For this case, we are able to prove the following theorem, showing that uniqueness does hold in the inverse problem if a geometric assumption is made: 

\begin{The}\label{main-finite}
Let $\Omega\subset\mathbb R^n$ be a bounded open set, $s\in(0,1)$, and let $\Psi_1, \Psi_2 \in B_0(H^s,H^{-s})$ have finite propagation. Assume that there exist two open sets $W_1, W_2\subset\Omega_e$ far enough from $\Omega$ and among themselves such that $$ \Lambda_{\Psi_1}f|_{W_2} = \Lambda_{\Psi_2}f|_{W_2} $$
for all $f\in C^\infty_c(W_1)$. Then $\Psi_1 = \Psi_2$ as operators in $B(\widetilde H^s(\Omega),H^{-s}(\Omega))$.
\end{The}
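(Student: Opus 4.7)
The plan is to combine an Alessandrini-type integral identity with Runge approximation, and to use the finite propagation of $\Psi_1,\Psi_2$ to eliminate every nonlocal boundary contribution. First I would derive, by manipulating the bilinear form naturally associated to \eqref{main-eq} together with the duality $\langle \Psi u,v\rangle=\langle u,\Psi^{*}v\rangle$, the identity
\[
\langle (\Lambda_{\Psi_1}-\Lambda_{\Psi_2})f,g\rangle \;=\; \langle (\Psi_1-\Psi_2)u_1,v_2\rangle,
\]
where $u_1\in H^{s}(\R^{n})$ solves $\fraclaplace u_1+\Psi_1 u_1=0$ in $\Omega$ with $u_1|_{\Omega_{e}}=f$, and $v_2\in H^{s}(\R^{n})$ solves the adjoint problem $\fraclaplace v_2+\Psi_{2}^{*}v_2=0$ in $\Omega$ with $v_2|_{\Omega_{e}}=g$. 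Choosing $f\in C_c^\infty(W_1)$ and $g\in C_c^\infty(W_2)$, the hypothesis on the DN maps makes the left-hand side vanish.

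Next I would split $u_1=\tilde u_1+f$ and $v_2=\tilde v_2+g$ with $\tilde u_1,\tilde v_2\in\widetilde H^{s}(\Omega)$, and expand the surviving pairing into four pieces. Setting $R:=\max(R_1,R_2)$ (the adjoint $\Psi_{2}^{*}$ inherits propagation $R_2$ by duality), the quantitative reading of ``far enough'' should be
\[
\mathrm{dist}(W_1,\Omega)>R,\quad \mathrm{dist}(W_2,\Omega)>R,\quad \mathrm{dist}(W_1,W_2)>R.
\]
Under these conditions the supports $\spt(\Psi_i f)\subseteq N(W_1,R_i)$ are disjoint from both $\overline\Omega$ and $W_2$, while $\spt(\Psi_i\tilde u_1)\subseteq N(\overline\Omega,R_i)$ is disjoint from $W_2$, so three of the four cross-pairings vanish. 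What remains is the clean identity
\[
\langle (\Psi_1-\Psi_2)\tilde u_1,\tilde v_2\rangle=0\qquad\text{for all }f\in C_c^\infty(W_1),\ g\in C_c^\infty(W_2).
\]

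Finally I would invoke the Runge approximation property proved earlier in the paper, applied to $\fraclaplace+\Psi_1$ with control set $W_1$ and to $\fraclaplace+\Psi_{2}^{*}$ with control set $W_2$, to conclude that the families $\{\tilde u_1\}$ and $\{\tilde v_2\}$ produced this way are dense in $\widetilde H^{s}(\Omega)$. Since $\Psi_1-\Psi_2$ is bounded from $H^{s}$ to $H^{-s}$, the pairing $(v,w)\mapsto\langle (\Psi_1-\Psi_2)v,w\rangle$ is continuous on $\widetilde H^{s}(\Omega)\times\widetilde H^{s}(\Omega)$, and density then yields $\langle (\Psi_1-\Psi_2)v,w\rangle=0$ for every $v,w\in\widetilde H^{s}(\Omega)$, i.e.\ $\Psi_1=\Psi_2$ in $B(\widetilde H^{s}(\Omega),H^{-s}(\Omega))$. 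I expect the main technical obstacle to be the careful bookkeeping around the adjoint object $\Psi_{2}^{*}$ (well-posedness of its Dirichlet problem on $\Omega$ and the transfer of the finite propagation property to the adjoint) and the precise quantification of the ``far enough'' hypothesis so that the three unwanted cross-pairings actually vanish simultaneously.
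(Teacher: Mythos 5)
Your proposal is correct and follows essentially the same route as the paper: an Alessandrini identity, Runge approximation from both $W_1$ (for $\Psi_1$) and $W_2$ (for $\Psi_2^*$), and elimination of the cross-pairings via the finite propagation of $\Psi_1,\Psi_2$ (and, as you rightly note, of $\Psi_2^*$) together with the "far enough" geometric hypothesis. The paper phrases the density step by plugging in explicit approximating sequences $P_{\Psi_1}f_{1,k}-f_{1,k}\to v_1$ and $P_{\Psi_2^*}f_{2,k}-f_{2,k}\to v_2$ and passing to the limit, while you first obtain the clean identity $\langle(\Psi_1-\Psi_2)\tilde u_1,\tilde v_2\rangle=0$ and then invoke density of the Runge sets plus continuity of the bilinear form; these are the same argument in a slightly different order.
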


The proof of Theorem \ref{main-finite} is based on two main ingredients. The first one of them is an Alessandrini identity, which we prove in Lemma \ref{alex} and whose use is relating the difference of the DN maps to the difference of the perturbations. The second ingredient is a Runge approximation property, showing that any $\widetilde H^s(\Omega)$ function can be approximated by solutions of \eqref{main-eq} whose exterior data are supported far enough from $\Omega$. In order to prove this, we shall make use of a UCP result with a similar geometric condition.
\vspace{3mm}

Given the nonlocality of the operators involved, which in some sense is "worse" for general quasilocal perturbations than for perturbations of finite propagation, the technique described above can not simply be applied to the main case of quasilocal perturbations as it is. We start by proving a qualitative UCP result for quasilocal perturbations, which we believe is interesting on its own right and thus state as a theorem:

\begin{The}[UCP for $(-\Delta)^s + \Psi$, $\Psi$ quasilocal]\label{UCP_for_quasilocal_perturbation}\label{th:quasilocal} Let $W\subset\mathbb R^n$ be an unbounded open set and $\Omega\subset W_e$ be a bounded, open and smooth set. Assume that $u\in \widetilde H^s(\Omega)$, that $\|(-\Delta)^su\|_{L^2(\Omega)}<\infty$, and $((-\Delta)^su + \Psi u)|_W=0$, with $\Psi\in B(H^s,H^{-s})$ a quasilocal perturbation whose decay function $\mu$ satisfies 
\begin{equation}\label{mu-condition}
   \lim_{r\rightarrow\infty} c_M(r) |\log \mu(r)|^{-\sigma_M(r)}=0\;. 
\end{equation}
for some $M\geq 0$. Then $u\equiv 0$.\end{The}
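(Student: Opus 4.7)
The first step is to exploit the support of $u$. Since $u \in \widetilde H^s(\Omega)$ vanishes outside $\overline\Omega$ and $\Omega \subset W_e$, we have $u|_W = 0$, and the PDE reduces on $W$ to $\fraclaplace u = -\Psi u$. The quasilocality hypothesis, combined with $\spt u \subset \overline\Omega$, yields
$$ \|\fraclaplace u\|_{H^{-s}(N(\overline\Omega, r)_e)} = \|\Psi u\|_{H^{-s}(N(\overline\Omega, r)_e)} \leq \mu(r)\|u\|_{H^s} $$
for every $r > 0$. In other words, on the portion of $W$ lying outside the $r$-neighbourhood of $\overline\Omega$, $u$ is exactly zero while $\fraclaplace u$ has $H^{-s}$-norm controlled by $\mu(r)\|u\|_{H^s}$, which tends to $0$ as $r \to \infty$. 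Thus far from $\Omega$ the function $u$ carries approximate Cauchy data that are quantitatively small.

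The plan is then to feed these data into a quantitative unique continuation estimate for the Caffarelli-Silvestre extension $U$ of $u$. Recall that $U$ lives on $\R^\dimens \times (0,\infty)$, solves a degenerate divergence-form elliptic equation with weight $y^{1-2s}$, has Dirichlet trace $u$ at $y=0$, and its weighted Neumann trace at $y=0$ is a constant multiple of $\fraclaplace u$. Choose a ball $B_\rho(x_0) \subset W \cap N(\overline\Omega, r)_e$; such a ball exists for $r$ sufficiently large by the unboundedness of $W$, and the dependence of $\rho$ and $|x_0|$ on $r$ is what the free parameter $M \geq 0$ in \eqref{mu-condition} encodes. On the flat piece $B_\rho(x_0) \times \{0\}$ both traces of $U$ are quantitatively small, so a Carleman-estimate or three-ball propagation-of-smallness argument for the extension equation, followed by the interpolation that passes from interior norms of $U$ back to $\|u\|_{L^2(\Omega)}$, should deliver a logarithmic stability bound
$$ \|u\|_{L^2(\Omega)} \leq c_M(r)\, |\log \mu(r)|^{-\sigma_M(r)} \|u\|_{H^s(\R^\dimens)} $$
valid for all $r$ beyond some threshold (where $\mu(r)<1$ and the logarithm is positive). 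Normalising $\|u\|_{H^s} \leq 1$, letting $r \to \infty$, and invoking \eqref{mu-condition} forces the right-hand side to $0$, so $\|u\|_{L^2(\Omega)} = 0$; together with $u|_{\Omega_e}=0$ this gives $u \equiv 0$.

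The main obstacle is the quantitative step. Classical propagation of smallness for the CS extension is usually stated for exact Cauchy data, but here the Neumann trace is only $H^{-s}$-small rather than zero, so the estimate must accommodate an inhomogeneous contribution whose size depends on where the observation ball is placed. One must then carefully balance two competing effects: pushing the ball farther from $\Omega$ shrinks $\mu(r)$, yet simultaneously worsens the stability constants $c_M(r)$ and the exponent $\sigma_M(r)$ produced by the Carleman scheme. Condition \eqref{mu-condition} is precisely the calibration under which this balance can still be struck, and producing an inequality whose right-hand side matches that condition is the delicate technical heart of the proof.
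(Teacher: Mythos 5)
Your outline matches the paper's proof: quasilocality together with $\spt u \subset \overline\Omega$ gives $\|(-\Delta)^s u\|_{H^{-s}(W_r)} \leq \mu(r)\|u\|_{H^s}$ on $W_r := W\cap N(\Omega,r)_e$; the Caffarelli--Silvestre propagation-of-smallness estimate (Theorem \ref{logestimate}, applied with $\Psi = 0$) combined with the well-posedness bound for the normalisation $u_1 := u/\|(-\Delta)^s u\|_{L^2(\Omega)}$ yields $\|u_1\|_{H^s} \leq C\, c_M(r)\,|\log(\mu(r)\|u_1\|_{H^s})|^{-\sigma_M(r)}$; and condition \eqref{mu-condition} drives the right side to zero as $r\to\infty$. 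Two points to fix in your reading. First, $M$ in \eqref{mu-condition} is not a geometric quantity encoding where the observation ball sits: it is the $B(L^2,L^2)$-norm bound on the perturbation appearing in Theorem \ref{logestimate}, and the $r$-degradation of $c_M(r),\sigma_M(r)$ comes entirely from the dependence of those stability constants on the observation set $W_r$ (tracked explicitly in the Appendix) --- indeed, the stability estimate is applied here to the unperturbed equation. Second, the normalisation that matches Theorem \ref{logestimate} is $\|(-\Delta)^s u_1\|_{L^2(\Omega)} = 1$, not $\|u\|_{H^s}=1$, and one must first dispose of the degenerate case $\|(-\Delta)^s u\|_{L^2(\Omega)} = 0$ separately via well-posedness of the unperturbed problem; the passage from $\|v\|_{H^{-s}(\Omega)}$ to $\|u_1\|_{H^s}$ then uses the well-posedness estimate rather than following directly from the extension argument.
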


Here the functions $c_M(r), \sigma_M(r)$ are the same ones appearing in the fractional Calder\'on stability result from \cite{RS17} (check also our Sections \ref{subsec:log} and \ref{sec:UCP-Runge} for the exact definitions). Using Theorem \ref{th:quasilocal}, we are able to prove a \emph{qualitative} Runge approximation property. However, since the nonlocality of the operator appearing in the Alessandrini identity calls for an estimate of the exterior datum corresponding to the solution used to test said identity, we rather need a \emph{quantitative} Runge approximation result. We are able to obtain this property by means of a quantitative UCP and a technique similar to the one from \cite{RS17}, but suited for quasilocal perturbations. Eventually, we get the following uniqueness result for the inverse problem:

\begin{The}\label{main-quasi}
Let $\Omega\subset\mathbb R^n$ be a bounded, open and smooth set, $s\in(0,1)$, and let $W_1, W_2 \subset \Omega_e$ be open, unbounded and smooth sets such that $ |x_1 - x_2| \geq \max_{j=1,2}\{ \mbox{dist}(x_j,\Omega) \} $ for all $x_1\in W_1, x_2\in W_2$. Let $\Psi_1, \Psi_2 \in B(L^2,L^2)$ be quasilocal perturbations with decay function $\mu$ satisfying condition \eqref{mu-condition} for $M:=\max\{\|\Psi_1\|,\|\Psi_2\|\}$ and both $W_1, W_2$. Finally, assume that
$$ \Lambda_{\Psi_1}f|_{W_2} = \Lambda_{\Psi_2}f|_{W_2} $$
for all $f\in C^\infty_c(W_1)$. Then $\Psi_1=\Psi_2$ as operators in $B(\widetilde H^s(\Omega),H^{-s}(\Omega))$.
\end{The}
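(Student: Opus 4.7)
The plan is to follow the same two-step strategy used in the proof of Theorem \ref{main-finite}, namely combining an Alessandrini identity with a Runge approximation property, but now upgrading Runge to its quantitative version in order to compensate for the absence of strict finite propagation. First I would prove an analogue of Lemma \ref{alex} for quasilocal perturbations: if $u_j\in H^s(\R^n)$ solves $\fraclaplace u_j+\Psi_j u_j=0$ in $\Omega$ with exterior data $f_j\in C^\infty_c(W_j)$, then
$$\langle(\Lambda_{\Psi_1}-\Lambda_{\Psi_2})f_1,f_2\rangle=\langle(\Psi_2-\Psi_1)u_1,u_2\rangle.$$
Because $\spt f_1\subset W_1$ and $\spt f_2\subset W_2$, the hypothesis on the DN data forces the left-hand side to vanish. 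The goal is then to transfer this orthogonality, which holds for solutions of the equation, to an orthogonality on a dense subset of $\widetilde H^s(\Omega)\times\widetilde H^s(\Omega)$.

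Second, given arbitrary target functions $v_1,v_2\in\widetilde H^s(\Omega)$, I would invoke the quantitative Runge approximation established earlier in the paper to produce, for each $k$, solutions $u_j^{(k)}$ with exterior data $f_j^{(k)}\in C^\infty_c(W_j\cap B_{r_k})$ such that $\|u_j^{(k)}-v_j\|_{L^2(\Omega)}\leq\varepsilon_k\to 0$, together with an explicit upper bound on $\|f_j^{(k)}\|_{H^s}$ of logarithmic type in $\varepsilon_k$, with constants controlled by the $c_M,\sigma_M$ of the propagation-of-smallness analysis. Writing $u_j^{(k)}=v_j^{(k)}+f_j^{(k)}$ with $v_j^{(k)}\in\widetilde H^s(\Omega)$ and substituting into the Alessandrini identity gives
$$0=\langle(\Psi_2-\Psi_1)v_1^{(k)},v_2^{(k)}\rangle+\text{cross terms pairing }f_1^{(k)},\,f_2^{(k)},\,\Psi_j u_i^{(k)}.$$
The first term converges to $\langle(\Psi_2-\Psi_1)v_1,v_2\rangle$ by the $L^2$-continuity hypothesis $\Psi_j\in B(L^2,L^2)$ together with the convergence $v_j^{(k)}\to v_j$.

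The core of the argument, and the main obstacle, lies in controlling the cross terms. Each such term is a pairing of $\Psi_j$ applied to a function essentially supported in $\Omega\cup W_1$ against a test function supported in $W_2$ (or the symmetric version). By the quasilocality estimate, the relevant $H^{-s}$-norms are bounded by $\mu(r_k)\|u_i^{(k)}\|_{H^s}$, where $r_k$ is the distance from $\spt u_i^{(k)}$ to the complementary test set. The geometric assumption $|x_1-x_2|\geq\max_{j}\mathrm{dist}(x_j,\Omega)$ is precisely what guarantees that this distance grows with $r_k$ both between $\Omega$ and the far parts of $W_2$ and between $W_1$ and $W_2$. The danger is that $\|f_j^{(k)}\|_{H^s}$, and hence $\|u_j^{(k)}\|_{H^s}$, may blow up as $\varepsilon_k\to 0$ at a rate dictated by the Runge cost.

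The balance between this blow-up and the decay $\mu(r_k)$ is exactly what assumption \eqref{mu-condition} encodes. By choosing $r_k$ and $\varepsilon_k$ coupled so that the Runge-approximation bound is of order $|\log\mu(r_k)|^{\sigma_M(r_k)}/c_M(r_k)$, the product $\mu(r_k)\|u_j^{(k)}\|_{H^s}$ tends to zero by \eqref{mu-condition}, and all cross terms vanish in the limit. We conclude $\langle(\Psi_2-\Psi_1)v_1,v_2\rangle=0$ for all $v_1,v_2\in\widetilde H^s(\Omega)$, hence $\Psi_1=\Psi_2$ as operators in $B(\widetilde H^s(\Omega),H^{-s}(\Omega))$. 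I expect the most delicate step to be the precise bookkeeping of the Runge cost against $\mu$: the identification of the correct $c_M,\sigma_M$ and the verification that the available quantitative Runge estimate is indeed compatible with \eqref{mu-condition}, which is why the latter is tailored to the \cite{RS17} stability rate.
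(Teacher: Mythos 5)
Your overall strategy is the same as the paper's: Alessandrini identity plus quantitative Runge approximation, with the quasilocality estimate and the geometric distance hypothesis used to kill the cross terms. However, three points in your sketch are off and one of them is a genuine gap. First, the Alessandrini identity should pair $P_{\Psi_1}f$ against $P_{\Psi_2^*}g$, not against a solution for $\Psi_2$; as written, your identity $\langle(\Lambda_{\Psi_1}-\Lambda_{\Psi_2})f_1,f_2\rangle=\langle(\Psi_2-\Psi_1)u_1,u_2\rangle$ with $u_2$ solving $(-\Delta)^s u_2+\Psi_2 u_2=0$ is false when $\Psi_2$ is not self-adjoint, and the paper's Lemma \ref{alex} (and then the Runge step applied to $\Psi_2^*$) is what you actually need. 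Second, the exterior data must be supported in the \emph{far} part $W_j\cap N(\Omega,R_k)_e$ of $W_j$, not in $W_j\cap B_{r_k}$: putting the data in a bounded neighbourhood of the origin keeps it near $\Omega$ and destroys the quasilocality gain.

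The more substantive issue is your bookkeeping of the Runge cost. You expect a bound on $\|f_j^{(k)}\|_{H^s}$ that is ``logarithmic in $\varepsilon_k$'' and that the product $\mu(r_k)\|u_j^{(k)}\|_{H^s}$ vanishes ``by \eqref{mu-condition}''. In fact Theorem \ref{Qrunge} only gives $\|f_\epsilon\|_{H^s}\leq\mu(R_\epsilon)^{\delta-1}\|v\|_{L^2}$, a negative power of $\mu$, which is enormously larger than any logarithm of $\varepsilon$. With two such data, the worst cross term behaves like $\mu(R_k)\cdot\mu(R_k)^{\delta-1}\cdot\mu(R_k)^{\delta-1}=\mu(R_k)^{2\delta-1}$, and this vanishes only because one is free to fix $\delta\in(1/2,1)$ in advance; condition \eqref{mu-condition} enters only to guarantee that an admissible $R_\epsilon$ exists for every $\epsilon$, not to make the cross terms small. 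Without identifying the parameter $\delta$ and the constraint $\delta>1/2$, the ``balance'' you invoke is not actually justified, so this is the step where your proof would stall.
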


In all three theorems we are making some geometric assumptions regarding the unboundedness of $W$ and the relative distances between the measuring sets $W_1, W_2$ and the studied domain $\Omega$. We believe that the geometric assumptions appearing in Theorems \ref{main-finite} and \ref{th:quasilocal} are essential for the proofs. In contrast, the geometric assumptions of Theorem \ref{main-quasi} serve the purpose of simplifying the proof and can probably be refined (see Remark \ref{geom-remark}).   

\subsection{Connection to the earlier literature}

In the technique known as electrical impedance tomography, one attempts to recover information regarding the electrical properties of an object by means of measurements of voltage and current performed on its surface. This has lead to the classical Calder\'on problem, which consists in recovering an unknown potential in the interior of a bounded domain $\Omega$ from a Dirichelet-to-Neumann (DN) map representing measurements on $\partial\Omega$~\cite{Uh09, Uh14}. 

In the seminar paper~\cite{GSU20} a nonlocal counterpart to the classical Calder\'on problem was first introduced: this consists in recovering a potential $q$ associated to a fractional Schr\"odinger operator $(-\Delta)^s+q$ from a DN map encoding measurements in the exterior $\Omega_e$. The cited work proved uniqueness in the case of potentials $q\in L^\infty(\Omega)$ for exponents $s\in(0,1)$. A subsequent paper~\cite{RS17} extended the result to rough potentials belonging to certain spaces of Sobolev multipliers. The proof of uniqueness was further generalized in~\cite{CMR20} to include all positive fractional exponents $s\in\mathbb R^+\setminus\mathbb Z$ in the case of rough potentials.

In all the cited works, the potential $q$ can be interpreted as a local $0$-th order perturbation of the original fractional Laplacian. The paper~\cite{CLR18} introduced first order perturbations of the form $b\cdot\nabla + q$ in the assumption that $s\in (1/2, 1)$, and showed uniqueness in such case. The result was eventually extended to higher order rough perturbations and all exponents $s\in\mathbb R^+\setminus\mathbb Z$ in \cite{CMRU20}; as observed by the authors of such article, all local perturbations of the fractional Laplacian are necessarily of this kind. Some specific cases of nonlocal perturbations were studied in \cite{BGU21, LO21}.

The problem of stability for the fractional Calder\'on problem has also been addressed, and exponential instability was shown in~\cite{RS17, RS17d}. More general instability mechanisms for inverse problems have been the focus of \cite{KRS21}. Moreover, the fractional Calder\'on problem has been solved even for a single measurement in ~\cite{GRSU20}. Other settings include the fractional magnetic Schr\"odinger equation~\cite{C20a, CMR20, Li20a, Li20b, Li21}, the fractional conductivity equation~\cite{C20}, the fractional heat equation~\cite{LLR19,RS17a} and a fractional elasticity equation with constant coefficients \cite{LL21}. A semilinear fractional Schr\"odinger equation was studied in~\cite{LL19, Li20a, Li20b, Li21}. Many more details about fractional Calder\'on problems can be found in the survey~\cite{S17}. 
\vspace{3mm}

By studying a fractional Schr\"odinger equation with a special family of nonlocal perturbations, the present work can be considered a step towards the study of more general nonlocal perturbations of the fractional Laplacian. We expect that our results serve the purpose of highlighting the possible obstacles and assumptions which may be needed by a more general theory of nonlocally perturbed Calder\'on-type problems. We also believe that quasilocal perturbations may be interesting by themselves, given that some quite natural examples of nonlocal perturbations, such as (weighted) convolutions against Schwartz functions, belong to this family (see Example 4 in Section \ref{sec:examples}).      
Finally, it may be argued that our Theorem \ref{th:quasilocal} bears a connection to the fractional Landis conjecture studied in \cite{RW19} (see Remark \ref{landis}).
\vspace{3mm}

Besides the purely mathematical interest, the study of nonlocal operators, and in particular of the fractional Laplacian, is motivated by the numerous applications that have been found for them in the natural sciences (see e.g.~\cite{BV16, MK00, RS15} and the relative references). Whenever a diffusion process presents a relation between time and mean displacement other than quadratic, it is considered anomalous. The fractional Laplacian and related operators appear in the study of such nonlocal diffusion phenomena, and therefore arise in models for mathematical finance \cite{AB88, Le04, Sc03}, engineering \cite{GO08}, turbulent fluid dynamics \cite{Co06, DG13}, physics \cite{DGLZ12, Er02, GL97, MK00, ZD10}, fractional quantum mechanics~\cite{La18, La02}, and ecology \cite{Hum10, MV18, RR09}, just to name a few. 

\subsection{Organization of the article}
The rest of the article is organized as follows. Section \ref{sec:pre} contains preliminaries about the fractional Sobolev spaces involved in the discussion, the fractional Laplacian, quasilocal operators and logarithmic estimates for the perturbed fractional Schr\"odinger equation. In Section \ref{sec:wellposedness-DN} we study the well-posedness of the direct problem and introduce the DN map. The qualitative unique continuation and Runge approximation properties are discussed in Section \ref{sec:UCP-Runge}. Section \ref{sec:quantiRunge} is devoted to proving the quantitative Runge approximation property, which is fundamental in the proof of the main theorems in Section \ref{sec:proofs-main}. Section \ref{sec:examples} contains many non-trivial examples of the previously introduced operators, adding clarity to the discussion. Finally, the Appendix deals with the proof of the quantitative unique continuation property, in particular showing how the constants involved in the logarithmic stability estimates for the fractional Schr\"odinger equation depend on the geometry and parameters of the problem.

\subsection{Acknowledgements} The author was supported by an Alexander-von-Humboldt postdoctoral fellowship; he also wishes to thank professors Angkana R\"uland and Mikko Salo for helpful discussion.

\section{Preliminaries}\label{sec:pre}

\subsection{Sobolev spaces and the fractional Laplacian}
The fractional Sobolev space of order $r\in\R$ based on $L^2$ is defined as
\begin{equation*}
H^{r}(\R^\dimens):=\{u\in\tempered(\R^\dimens): \ifourier(\langle\cdot\rangle^r\hat{u})\in L^2(\R^\dimens)\},
\end{equation*}
where $\mathscr S(\mathbb R^n), \mathscr S'(\mathbb R^n)$ respectively indicate the sets of Schwartz functions and tempered distributions, $\hat{u}=\fourier (u)$ is the Fourier transform, $\mathcal F^{-1}(u)$ is the inverse Fourier transform, and $\langle x\rangle:=(1+\abs{x}^2)^{1/2}$. The space $H^{r}(\R^\dimens)$ can be equipped with the norm 
\begin{equation*}
\aabs{u}_{H^{r}(\R^\dimens)}:=\aabs{\ifourier(\langle\cdot\rangle^r\hat{u})}_{L^2(\R^\dimens)}.
\end{equation*}

For $\Omega, F\subset\R^\dimens$ an open and a closed set we define the following Sobolev spaces:
\begin{align*}
H_F^r(\R^\dimens) &=\{u\in H^r(\R^\dimens): \spt(u)\subset F\} \\
\widetilde{H}^r(\Omega)&= \ \text{closure of} \ C_c^{\infty}(\Omega) \ \text{in} \  H^r(\R^\dimens) \\
H^r(\Omega)&=\{u|_\Omega: u\in H^r(\R^\dimens)\} \\
H_0^r(\Omega)&= \ \text{closure of} \ C_c^{\infty}(\Omega) \ \text{in} \ H^r(\Omega). 
\end{align*}
One can prove that the inclusions $\widetilde{H}^r(\Omega)\subset H_0^r(\Omega)$ and $\widetilde{H}^r(\Omega)\subset H^r_{\overline{\Omega}}(\R^\dimens)$ always hold. If~$\Omega$ is Lipschitz, then we also have $\widetilde{H}^r(\Omega)=H_{\overline{\Omega}}^r(\R^\dimens)$ for all $r\in\R$ and $H_0^r(\Omega)=H^r_{\overline{\Omega}}(\R^\dimens)$ for all $r\geq 0$ such that $r-\frac{1}{2} \notin \mathbb Z$ (see e.g.~\cite{McLean}, Theorems 3.29, 3.33). Finally, by \cite[Theorem 3.3]{CWHM-sobolev-spaces-on-non-lipchtiz-domains} the following identifications with the dual spaces hold for all $r\in\R$: $(\widetilde{H}^r(\Omega))^*=H^{-r}(\Omega)$ and $(H^r(\Omega))^*=\widetilde{H}^{-r}(\Omega)$.

Since we are considering nonlocal operators, we impose exterior values for the equation. Therefore, we define the abstract trace space $X:=H^r(\R^\dimens)/\widetilde{H}^r(\Omega)$: two functions $f_1, f_2\in H^r(\R^\dimens)$ belong to the same class in $X$ if and only if they agree in $\Omega_e$, in the sense that $f_1-f_2 \in \widetilde H^r(\Omega)$. If~$\Omega$ is Lipschitz, then $X=H^r(\Omega_e)$, as proved in ~\cite[p.463]{GSU20}.

The fractional Laplacian of order $s\in(0,1)$ can be defined in many different ways (see e.g.~\cite{DL21, Kw15, LPGS}). Here we use the symbol definition: for us the fractional Laplacian is given by $$\fraclaplace \varphi:=\ifourier(\abs{\cdot}^{2s}\hat{\varphi})$$ for $\varphi\in\schwartz(\R^\dimens)$. Given the density of $\schwartz(\R^\dimens)$ in $H
^r(\R^\dimens)$, the fractional Laplacian can be extended to act as a bounded operator $\fraclaplace\colon H^r(\R^\dimens)\rightarrow H^{r-2s}(\R^\dimens)$ for all $r\in\R$. This nonlocal operator has two important properties which we use in our proofs: the unique continuation property (UCP) and the Poincar\'e inequality.
\begin{Lem}[UCP]\label{ucpfraclap}
Let $s\in(0,1)$, $r\in\R$ and $u\in H^{r}(\R^\dimens)$. If $(-\Delta)^s u|_V=0$ and $u|_V=0$ for $V\subset\R^\dimens$ open and non-empty, then $u\equiv0$.
\end{Lem}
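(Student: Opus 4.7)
The plan is to prove the lemma via the Caffarelli–Silvestre (CS) extension, which turns the stated nonlocal unique continuation statement into a local unique continuation statement for a degenerate elliptic equation in one additional dimension.

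First, I would perform a reduction by mollification so that the regularity $r$ does not play a role. Let $\phi_{\eps}$ be a standard mollifier and set $u_{\eps}:=u\ast\phi_{\eps}$. Then $u_{\eps}\in H^{r'}(\R^{\dimens})$ for every $r'\in\R$, and since $\fraclaplace$ commutes with translations and hence with convolution, both $u_{\eps}|_{V_{\eps}}=0$ and $\fraclaplace u_{\eps}|_{V_{\eps}}=0$ on $V_{\eps}:=\{x:B_{\eps}(x)\subset V\}$, which is open and nonempty for $\eps$ small. If the lemma is established for $u\in H^{s}(\R^{\dimens})$, then $u_{\eps}\equiv 0$ for all such $\eps$, and letting $\eps\to 0$ in $H^{r}(\R^{\dimens})$ yields $u\equiv 0$. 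Hence from now on I may assume $u\in H^{s}(\R^{\dimens})$.

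Next, take the CS extension $\tilde u\colon\R^{\dimens}\times(0,\infty)\to\R$, which satisfies
\begin{equation*}
\mathrm{div}(y^{1-2s}\nabla\tilde u)=0\ \text{in}\ \R^{\dimens}\times(0,\infty),\qquad \tilde u(\cdot,0)=u,\qquad -c_{s}\lim_{y\to 0^{+}}y^{1-2s}\p_{y}\tilde u=\fraclaplace u,
\end{equation*}
with the last two identities interpreted in the appropriate (weighted) trace sense. The hypotheses $u|_{V}=0$ and $\fraclaplace u|_{V}=0$ translate into the vanishing of both the Dirichlet trace and the weighted Neumann trace of $\tilde u$ on the flat piece $V\times\{0\}$. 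This is exactly the configuration allowing even reflection: setting $\tilde u^{\ast}(x,y):=\tilde u(x,|y|)$ in a slab $V\times(-\delta,\delta)$, the vanishing of the weighted co-normal derivative guarantees that no singular transmission term appears across $y=0$, so $\tilde u^{\ast}$ is a weak solution of $\mathrm{div}(|y|^{1-2s}\nabla\tilde u^{\ast})=0$ in that slab.

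With this local, degenerate elliptic reformulation in hand, I invoke the strong unique continuation property for operators of the form $\mathrm{div}(|y|^{1-2s}\nabla\cdot)$ associated to the Muckenhoupt $A_{2}$ weight $|y|^{1-2s}$, as proved via Carleman estimates by R\"uland. Since $\tilde u^{\ast}$ vanishes identically on the open set $V\times\{0\}$, it in particular vanishes to infinite order at every interior point of $V\times\{0\}$; SUCP then forces $\tilde u^{\ast}\equiv 0$ in an $(n+1)$-dimensional open neighbourhood of any such point. Restricting to $y>0$, this means $\tilde u$ vanishes on an open subset of the upper half-space $\R^{\dimens}\times(0,\infty)$. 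Since the equation satisfied by $\tilde u$ is uniformly elliptic away from $\{y=0\}$, the classical interior unique continuation principle propagates this to $\tilde u\equiv 0$ in all of $\R^{\dimens}\times(0,\infty)$, and taking the Dirichlet trace as $y\to 0^{+}$ gives $u\equiv 0$, as desired.

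The main obstacle is the weighted SUCP step: proving unique continuation for the degenerate operator $\mathrm{div}(|y|^{1-2s}\nabla\cdot)$ requires a Carleman estimate tailored to the $A_{2}$ weight $|y|^{1-2s}$, and the CS extension and reflection must be set up carefully so that the boundary hypotheses match the hypotheses of that Carleman estimate. The remaining ingredients (mollification, even reflection, interior elliptic UCP, boundary trace) are essentially bookkeeping once the SUCP is in place.
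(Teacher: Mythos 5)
The paper itself offers no proof of this lemma: it is quoted from \cite{GSU20}, and your argument reconstructs essentially the proof given there — mollification to reduce to $u\in H^s$, Caffarelli--Silvestre extension, even reflection across the flat boundary piece where both the Dirichlet and the weighted Neumann traces vanish, a Carleman-estimate-based unique continuation theorem for the $A_2$-degenerate operator $\mathrm{div}(|y|^{1-2s}\nabla\cdot)$, and finally interior propagation plus the trace. The mollification reduction, the reflection step and the interior propagation are all fine as you set them up.

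The one step that does not hold as written is the inference ``$\tilde u^{\ast}$ vanishes identically on the open set $V\times\{0\}$, so it vanishes to infinite order at every interior point of $V\times\{0\}$.'' Vanishing on an $n$-dimensional slice gives no control whatsoever on the solid vanishing order, i.e.\ on the weighted $L^2$ averages of $\tilde u^{\ast}$ over $(n+1)$-dimensional balls centred on $\{y=0\}$, which is what the SUCP you invoke requires; for instance (in the model case $s=1/2$) the harmonic function $w(x,y)=y$ vanishes identically on $\{y=0\}$ but only to first order at its points. That example of course violates the Neumann condition, which is precisely the point: the infinite-order vanishing must be extracted from the equation \emph{together with both} vanishing traces, via a frequency-function or boundary Carleman argument, and this is a genuine piece of work in the literature rather than a remark. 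The clean repair is to invoke the boundary form of the unique continuation result (R\"uland's degenerate-Carleman result, or Fall--Felli), which is stated exactly for your configuration: a solution of the extension problem whose Dirichlet trace and weighted Neumann trace both vanish on an open subset of $\{y=0\}$ vanishes identically in a half-ball over that subset. This is the statement actually used in \cite{GSU20}; citing it in place of the pointwise SUCP closes the gap, and the remainder of your argument then goes through unchanged.
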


\begin{Lem}[Poincar\'e inequality]
Assume $s\in(0,1)$ and $u\in H^s_K(\R^\dimens)$, where $K\subset\R^\dimens$ is a compact set. There exists a constant $c=c(n, s, K)> 0$ such that
\begin{equation*}
\aabs{u}_{L^2(\R^\dimens)}\leq c\aabs{(-\Delta)^{s/2}u}_{L^2(\R^\dimens)}.
\end{equation*}
\end{Lem}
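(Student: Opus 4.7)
The plan is to exploit the Gagliardo--Slobodeckij representation of the homogeneous $H^s$-seminorm together with the compact support of $u$.

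First, I would recall the well-known identity
\begin{equation*}
\aabs{(-\Delta)^{s/2}u}_{L^2(\R^\dimens)}^2 = \frac{c_{\dimens,s}}{2}\iint_{\R^\dimens\times\R^\dimens}\frac{|u(x)-u(y)|^2}{|x-y|^{\dimens+2s}}\,dx\,dy,
\end{equation*}
valid for every $u\in H^s(\R^\dimens)$ with an explicit constant $c_{\dimens,s}>0$ arising from the Fourier symbol of the fractional Laplacian via Plancherel. Since $u$ vanishes on $K^c$, splitting the double integral along the four ``quadrants'' $K\times K$, $K\times K^c$, $K^c\times K$ and $K^c\times K^c$, then discarding the first (nonnegative) and last (identically zero) contributions, by symmetry of the integrand yields
\begin{equation*}
\aabs{(-\Delta)^{s/2}u}_{L^2(\R^\dimens)}^2 \ \geq \ c_{\dimens,s}\int_K |u(x)|^2 \left(\int_{K^c}\frac{dy}{|x-y|^{\dimens+2s}}\right)dx.
\end{equation*}

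The essential step is to lower-bound the inner integral uniformly in $x\in K$. Choosing $R>0$ so that $K\subset \overline{B_R(0)}$, for every $x\in K$ and every $y$ with $|y|\geq 2R$ the triangle inequality gives $|x-y|\leq|x|+|y|\leq \frac{3}{2}|y|$, hence
\begin{equation*}
\int_{K^c}\frac{dy}{|x-y|^{\dimens+2s}} \ \geq \ \left(\frac{2}{3}\right)^{\dimens+2s}\int_{|y|\geq 2R}\frac{dy}{|y|^{\dimens+2s}} \ = \ c'(\dimens,s)\,R^{-2s},
\end{equation*}
with a bound independent of $x\in K$. Plugging this back and taking square roots delivers the desired Poincaré inequality with a constant $c(\dimens,s,K)\lesssim R^s$, which is consistent with the scaling weight $s$ of $(-\Delta)^{s/2}$.

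The argument is entirely elementary; the only non-trivial ingredient is the Gagliardo seminorm identity, which is classical. I therefore do not foresee any real obstacle: the geometric computation on the inner integral is the heart of the matter, and it depends only on the radius of a ball enclosing $K$, exactly as the statement of the lemma allows.
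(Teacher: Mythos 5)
Your proof is correct. The paper does not prove the lemma itself but refers to \cite{GSU20}; the proof there, to my recollection, runs through a soft compactness argument (normalize a minimizing sequence $u_j\in H^s_K$ with $\|u_j\|_{L^2}=1$ and $\|(-\Delta)^{s/2}u_j\|_{L^2}\to 0$, extract an $L^2$-convergent subsequence via Rellich--Kondrachov, and contradict the fact that $|\xi|^s\hat u=0$ a.e.\ forces $u=0$), so your route is genuinely different. You instead pass to the Gagliardo double-integral representation of $\|(-\Delta)^{s/2}u\|_{L^2}^2$, drop the nonnegative $K\times K$ contribution, and bound the remaining tail integral $\int_{K^c}|x-y|^{-n-2s}\,dy$ from below, uniformly over $x\in K$, by a constant of order $R^{-2s}$ where $K\subset\overline{B_R(0)}$. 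This is elementary, avoids any compactness or contradiction argument, and yields an explicit constant $c(n,s,K)\lesssim R^s$ with the correct scaling, which the soft argument does not produce. The only minor caveats are that you tacitly assume $R>0$ (harmless: if $K$ has measure zero then $u\equiv 0$ and the inequality is vacuous) and that the Gagliardo identity is established for Schwartz functions and then extended to $H^s$ by density; neither affects the validity of your argument.
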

 The first one is needed in the qualitative Runge approximation result, while the second one is used for proving the well-posedness of the direct problem. The proofs of both Lemmas can be found in \cite{GSU20}. It is worth noticing that both the definition of the fractional Laplacian and the listed results can be generalized to the case $s\in \mathbb R^+\setminus\mathbb Z$, as it was proved in \cite{CMR20}.

\subsection{Quasilocal operators}
We define $B(X,Y)$ to be the space of bounded linear operators between the normed spaces $X$ and $Y$. In particular, this means that if $s\in (0,1)$ and $\Psi \in B(H^s,H^{-s})$, then there exists a constant $C>0$ such that $\|\Psi f\|_{H^{-s}}\leq C \|f\|_{H^s} $ for all $f\in H^s$. Once equipped with the operator norm
$$ \|\Psi\| := \|\Psi\|_{B(H^s,H^{-s})} := \sup \{ \|\Psi f\|_{H^{-s}} : \|f\|_{H^s}=1 \}\;, $$
$B(H^s,H^{-s})$ is a normed space. Given that $$\widetilde H^s(\Omega)\subset H^s \subset L^2 \subset H^{-s} \subset H^{-s}(\Omega)$$ holds for all $s>0$, one has the following inclusions: $$ B(L^2,L^2)\subset B(H^s,H^{-s})\subset B(\widetilde H^s(\Omega),H^{-s}(\Omega))\;. $$
Let $B_0(H^s,H^{-s})$ be the closure of $B(L^2,L^2)$ in the norm of $B(H^s,H^{-s})$. This means that for all $\Psi \in B_0(H^s,H^{-s})$ and $\epsilon >0$ we can find $\Psi_1\in B(L^2,L^2)$ and $\Psi_2\in B(H^s,H^{-s})$ such that
$$ \Psi = \Psi_1 + \Psi_2\;, \qquad \|\Psi_2\|_{B(H^s,H^{-s})}<\epsilon\;. $$

The spaces defined above are related to the multiplier spaces $M(H^s\rightarrow H^{-s})$ studied in \cite{CMRU20}: in fact, one shows that $M(H^s\rightarrow H^{-s})\subset B(H^s,H^{-s})$ holds, and thus also $M_0(H^s\rightarrow H^{-s})\subset B_0(H^s,H^{-s})$.

\begin{Def}
The operator $\Psi\in B(H^s,H^{-s})$ is said to have \emph{finite propagation} if there exists $R\geq 0$ such that spt$(\Psi u) \subseteq N(\spt u,R)$ for all $u\in H^s$. The infimum of all such $R$ is called the \emph{propagation of $\Psi$}, and it is indicated by $p(\Psi)$.
\end{Def}

\begin{Def}
The operator $\Psi\in B(H^s,H^{-s})$ is said to be \emph{quasilocal} if there exists a \emph{decay function} $\mu: \mathbb R^+ \rightarrow \mathbb R^+_0$ with $\lim_{r\rightarrow\infty}\mu(r) = 0$ such that for all $u\in H^s$ and $r>0$ one has $ \|\Psi u\|_{H^{-s}(N(\spt u,r)_e)} \leq \mu(r)\|u\|_{H^s} $.
\end{Def}

In the next lemma we prove some properties which hold true for quasilocal operators.

\begin{Lem}(Properties of quasilocal operators)
\label{quasiprops}
\begin{enumerate}[label=\arabic*.]
    \item If $P\in B(H^s,H^{-s})$ is local, then it has finite propagation. If $\Psi\in B(H^s,H^{-s})$ has finite propagation, then it is quasilocal.
    \item Let $\Psi \in B(L^2,L^2)$ be quasilocal with decay function $\mu$. When interpreted as element of $B(H^s,H^{-s})$, $\Psi$ is again quasilocal and has the same decay function $\mu$. 
    \item Let $\Psi\in B(H^s,H^{-s})$ be quasilocal with decay function $\mu$. The adjoint operator $\Psi^*\in B(H^s,H^{-s})$ is again quasilocal and has the same decay function $\mu$.
    \item If $\Psi\in B_0(H^s,H^{-s})$, then $\Psi^*\in B_0(H^s,H^{-s})$ as well.
\end{enumerate}
\end{Lem}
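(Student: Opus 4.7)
The plan is to handle the four items separately; items 1, 2 and 4 are essentially bookkeeping, while item 3 contains the real computation.

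For item 1, both implications are immediate from the definitions. A local operator $P$ satisfies $\spt(Pu) \subseteq \spt u = N(\spt u, 0)$, so $p(P) = 0$; and if $\Psi$ has propagation $R$, then $\spt(\Psi u) \subseteq N(\spt u, r)$ for every $r \geq R$, so the step function $\mu(r) := \|\Psi\|$ for $r < R$ and $\mu(r) := 0$ for $r \geq R$ is a valid decay function (with the trivial operator-norm bound used for $r < R$). Item 2 follows at once from the continuous embeddings $L^2 \hookrightarrow H^s$ and $\widetilde H^s(A) \hookrightarrow L^2(A)$, which by duality yield $\|w\|_{H^{-s}(A)} \leq \|w\|_{L^2(A)}$ and $\|u\|_{L^2} \leq \|u\|_{H^s}$; chaining these with the $L^2$-quasilocal estimate produces the $H^s$ version with the same $\mu$.

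For item 3, after replacing $\mu$ by $r \mapsto \sup_{r' \geq r} \mu(r')$ I may assume $\mu$ is non-increasing. Fix $v \in H^s$ and $r > 0$. By density of $C^\infty_c$ in $\widetilde H^s$,
\[
\|\Psi^* v\|_{H^{-s}(N(\spt v, r)_e)} = \sup\bigl\{|\langle v, \Psi\phi\rangle| : \phi \in C^\infty_c(N(\spt v, r)_e),\ \|\phi\|_{H^s} \leq 1\bigr\}.
\]
For any such $\phi$, the compact set $\spt\phi$ lies inside the open set $N(\spt v, r)_e$, hence $d := \mathrm{dist}(\spt\phi, \spt v) > r$. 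Choosing $r' \in (r, d)$ one has $\spt v \subseteq N(\spt\phi, r')_e$, and the quasilocal estimate applied to $\phi$ gives
\[
\|\Psi\phi\|_{H^{-s}(N(\spt\phi, r')_e)} \leq \mu(r')\|\phi\|_{H^s} \leq \mu(r)\|\phi\|_{H^s}.
\]
Since $\spt v$ sits inside the open set on which $\Psi\phi$ is controlled, the $\mathbb R^n$-pairing $\langle v, \Psi\phi\rangle$ can be estimated by $\|v\|_{H^s}\,\|\Psi\phi\|_{H^{-s}(N(\spt\phi, r')_e)}$; taking the supremum over $\phi$ closes the proof.

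Item 4 uses that the adjoint map is an isometry of $B(H^s, H^{-s})$: the equality $\|\Psi^*\| = \|\Psi\|$ follows from symmetry of the dual pairing $(u, v) \mapsto \langle \Psi u, v\rangle$ over unit balls of $H^s$. If $\Psi_n \to \Psi$ in $B(H^s, H^{-s})$ with $\Psi_n \in B(L^2, L^2)$, then $\Psi_n^* \to \Psi^*$ in the same norm; a quick check shows that the Hilbert-space adjoint of $\Psi_n$ on $L^2$ agrees, as an operator $H^s \to H^{-s}$, with $\Psi_n^*$, so each $\Psi_n^*$ lies in $B(L^2, L^2)$ and therefore $\Psi^* \in B_0(H^s, H^{-s})$. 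The main technical hurdle is the duality step in item 3: strictly speaking one must justify that the global pairing $\langle v, \Psi\phi\rangle$ only sees $\Psi\phi$ on $N(\spt\phi, r')_e$, which amounts to approximating $v$ by elements of $\widetilde H^s(N(\spt\phi, r')_e)$. This is standard because $\spt v$ lies at positive distance from the boundary of this open set, but it is the only point where care with the function spaces is needed.
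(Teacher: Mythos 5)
Your proposal is correct in all four parts and follows essentially the same route as the paper: items~1, 2 and 4 are the obvious bookkeeping, and item~3 is the duality argument built on the symmetry $u\in\widetilde H^s(N(\spt v,r)_e)\Leftrightarrow v\in\widetilde H^s(N(\spt u,r)_e)$, phrased in your case through $C^\infty_c$ test functions and the distance $d=\mathrm{dist}(\spt\phi,\spt v)>r$.

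One small but real blemish in item~3: the preliminary replacement of $\mu$ by its non-increasing envelope $r\mapsto\sup_{r'\geq r}\mu(r')$ is not needed, and in fact proves a slightly weaker statement than the lemma asserts (you end up with the envelope as a decay function for $\Psi^*$, not $\mu$ itself). The detour through $r'\in(r,d)$ is the culprit; you may simply take $r'=r$. Since $d>r$, every point of $\spt v$ lies at distance $\geq d-r>0$ from $\partial N(\spt\phi,r)_e$, which is exactly the positive-distance observation you make in your last paragraph; it already gives $v\in\widetilde H^s(N(\spt\phi,r)_e)$, so the quasilocal estimate for $\Psi$ at scale $r$ may be applied directly and yields $\mu(r)$ without any monotonicity assumption. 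With that one-line fix, your item~3 coincides with the paper's argument and proves the statement as written.
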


\begin{proof}
\begin{enumerate}[label=\arabic*.]
    \item If $P$ is local then spt$(P u) = \spt u = N(\spt u,0)$ for all $u\in H^s$, and thus $P$ has finite propagation $p(P)=0$. If $\Psi$ has finite propagation, then the function $\mu(r):= \chi_{\{r\leq p(\Psi)\}}(r)\|\Psi\|$ has the properties required by quasilocality. However, not all quasilocal operators have finite propagation, and not all operators of finite propagation are local. This is shown by the examples in Section \ref{sec:examples}.
    \item This property is proved by the following computation: $$  \|\Psi u\|_{H^{-s}(N(\spt u, r)_e)} \leq \|\Psi u\|_{L^2(N(\spt u, r)_e)} \leq \mu(r)\|u\|_{L^2} \leq \mu(r)\|u\|_{H^s}\;. $$
    \item The adjoint $\Psi^*$ is the operator in $B(H^s,H^{-s})$ defined by $\langle \Psi^* u,v \rangle := \langle u, \Psi v \rangle$ for all $u,v\in H^s$. Given $u\in H^s$ and $r>0$, we want to prove that $$\|\Psi^* u\|_{H^{-s}(N(\spt u, r)_e)}\leq \mu(r)\|u\|_{H^s}\;.$$ Observe that by definition $H^{-s}(N(\spt u, r)_e) = \widetilde H^s(N(\spt u,r)_e)^*$. Moreover, for $u,v\in H^s$ we have $ u\in \widetilde H^s(N(\spt v,r)_e) \Leftrightarrow v\in\widetilde H^s(N(\spt u,r)_e) $. With these observations in mind, we compute for $u\in H^s$ and $v\in\widetilde H^s(N(\spt u,r)_e)$
    \begin{align*}
     |\langle \Psi^*u,v \rangle| = |\langle u,\Psi v \rangle| \leq \|u\|_{H^s}\|\Psi v\|_{H^{-s}(N(\spt v,r)_e)} \leq \mu(r) \|u\|_{H^s}\|v\|_{H^s}\;,
    \end{align*}
    and thus
    \begin{align*}
  \|\Psi^* u\|_{H^{-s}(N(\spt u, r)_e)} = \sup \{ |\langle \Psi^*u,v \rangle| : v\in\widetilde H^s(N(\spt u,r)_e), \|v\|_{H^s}=1 \} \leq \mu(r) \|u\|_{H^s}\;.
    \end{align*}
    \item Since $\Psi^*$ is known to be a bounded operator between $H^s$ and $H^{-s}$, we only need to show the approximation property. Let $\{\psi_n\}_{n\in\mathbb N}\subset B(L^2,L^2)$ be a sequence such that $\|\Psi-\psi_n\|_{B(H^s,H^{-s})}<1/n$. Then
    \begin{align*}
        \|\Psi^*-\psi_n^*\|_{B(H^s,H^{-s})} & = \sup \{ \|(\Psi^*-\psi_n^*) u\|_{H^{-s}} : \|u\|_{H^s}=1 \} \\ & = \sup \{ \sup \{ |\langle (\Psi^*-\psi_n^*)u,v \rangle| : \|v\|_{H^s}=1 \} : \|u\|_{H^s}=1 \} \\ & = \sup\{ |\langle u, (\Psi-\psi_n)v \rangle| : \|u\|_{H^s}=\|v\|_{H^s}=1 \} \\ & \leq \|\Psi-\psi_n\|_{B(H^s,H^{-s})}<1/n\;,
    \end{align*}
    and thus the sequence $\{\psi^*_n\}_{n\in\mathbb N}\subset B(L^2,L^2)$ approximates $\Psi^*$ in $B(H^s,H^{-s})$.
    \end{enumerate}
\end{proof}

\subsection{Logarithmic estimates for perturbed fractional Schr\"odinger equations}\label{subsec:log}

In the next theorem we give a logarithmic estimate for a fractional Schr\"odinger equation with a bounded perturbation on $L^2$:

\begin{The}\label{logestimate}
Let $\Omega\subset\mathbb R^n$, $n\geq 1$, be a bounded smooth domain, let $s\in(0,1)$, and let $W\subset\Omega_e$ be open. Let also $\Psi\in B(L^2(\Omega),L^2(\Omega))$ satisfy condition \eqref{eigenvalues} and $\|\Psi\|_{B(L^2(\Omega),L^2(\Omega))}\leq M$ for some constant $M>0$. There exist constants $c,\sigma$ depending on $\Omega, W, n, s$ and $M$ such that 
$$\|v\|_{H^{-s}(\Omega)}\leq c\, |\log(\|(-\Delta)^sw\|_{H^{-s}(W)})|^{-\sigma}\;,\qquad \mbox{ for all } v\in L^2(\Omega) : \|v\|_{L^2(\Omega)}=1\;,$$
where $w\in H^s(\mathbb R^n)$ solves $(-\Delta)^sw+\Psi w=v$ in $\Omega$ with $w|_{\Omega_e}=0$.
\end{The}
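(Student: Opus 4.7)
The plan is to adapt the Carleman-based propagation-of-smallness argument of R\"uland--Salo \cite{RS17} to the perturbed setting, with explicit tracking of the dependence of $c,\sigma$ on $M$. The first move is a reduction: by well-posedness of the direct problem (which we take from Section \ref{sec:wellposedness-DN}, using that $(-\Delta)^s+\Psi$ has trivial kernel and $\|\Psi\|\leq M$), the normalization $\|v\|_{L^2(\Omega)}=1$ yields an a priori bound $\|w\|_{H^s(\mathbb R^n)}\leq C(\Omega,s,M)$. Combined with continuity of $(-\Delta)^s\colon H^s\to H^{-s}$ and the bound $\|\Psi w\|_{H^{-s}(\Omega)}\leq \|\Psi w\|_{L^2(\Omega)}\leq M\|w\|_{L^2(\Omega)}$, the equation gives $\|v\|_{H^{-s}(\Omega)}\leq (1+M)\|w\|_{H^s(\mathbb R^n)}$, so the whole task reduces to showing logarithmic smallness of $\|w\|_{H^s(\mathbb R^n)}$ in terms of $\eta:=\|(-\Delta)^s w\|_{H^{-s}(W)}$.

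Next I would pass to the Caffarelli--Silvestre extension $\tilde w$ of $w$, which satisfies $\mathrm{div}(y^{1-2s}\nabla \tilde w)=0$ in $\mathbb R^{n+1}_+$, with Dirichlet trace $\tilde w|_{y=0}=w$ and weighted Neumann trace $-c_s\lim_{y\to 0^+}y^{1-2s}\partial_y\tilde w=(-\Delta)^s w$. Since $W\subset\Omega_e$ and $w|_{\Omega_e}=0$, on $W\times\{0\}$ both traces are controlled: the Dirichlet one vanishes and the weighted Neumann one has $H^{-s}(W)$-norm equal to $\eta$. Fixing a point $x_0\in W$ that admits a half-ball $B^+_{2\rho}(x_0,0)$ not meeting $\Omega\times\{0\}$, I would apply a boundary Carleman estimate for the degenerate operator $\mathrm{div}(y^{1-2s}\nabla\cdot)$ (as in \cite{RS17}), producing a three-balls/H\"older-type bound $\|\tilde w\|_{L^2(y^{1-2s},B^+_{\rho/2})}\lesssim \eta^{\alpha}\|\tilde w\|_{L^2(y^{1-2s},B^+_{2\rho})}^{1-\alpha}$ for some $\alpha\in(0,1)$ depending only on $n,s,\rho$.

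The smallness is then propagated through the half-space to cover a half-neighbourhood of $\Omega\times\{0\}$ by chaining interior three-balls inequalities for the degenerate elliptic operator along a finite path, with the number $N$ of chainings governed by $\mathrm{diam}(\Omega)$ and $\mathrm{dist}(\Omega,W)$. This yields $\|\tilde w\|_{L^2(y^{1-2s},K)}\leq C\eta^{\alpha^N}$ on a compactum $K$ whose trace on $\{y=0\}$ covers $\Omega$, with $C$ depending on the global a priori bound from the first step via the Caffarelli--Silvestre norm equivalence $\|\tilde w\|_{\dot H^1(y^{1-2s},\mathbb R^{n+1}_+)}\sim\|w\|_{\dot H^s(\mathbb R^n)}$. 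Using a trace inequality and interior regularity for the degenerate equation, this transfers to $\|w\|_{H^s(\Omega)}\leq C'\eta^{\beta}$ for some $\beta>0$.

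Finally, I would convert this H\"older-type bound on a bounded set into a logarithmic bound on the full $H^s(\mathbb R^n)$ norm by the standard R\"uland--Salo interpolation trick: interpolate the interior H\"older smallness against the finite but non-quantitative a priori bound $\|w\|_{H^s(\mathbb R^n)}\leq C$, optimizing a free parameter to absorb the contribution on $\mathbb R^n\setminus\Omega$. The outcome is $\|w\|_{H^s(\mathbb R^n)}\leq c|\log\eta|^{-\sigma}$, and the initial reduction then gives the stated estimate for $\|v\|_{H^{-s}(\Omega)}$. The main obstacle, and the reason the proof is postponed to the Appendix, will be the careful bookkeeping of how $c$ and $\sigma$ depend on $M$ through the Carleman weight, the number $N$ of propagation steps, and the well-posedness constant; this explicit dependence is precisely what must be plugged into condition \eqref{mu-condition} for the quasilocal inverse problem of Theorem \ref{main-quasi}.
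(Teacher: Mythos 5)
Your proposal is correct and follows essentially the same route as the Appendix: reduce via well-posedness and the equation to bounding $\|w\|_{H^s}$ in terms of $\eta=\|(-\Delta)^s w\|_{H^{-s}(W)}$, pass to the Caffarelli--Silvestre extension, propagate smallness from a half-ball over $W$ to a half-neighbourhood of $\Omega$ by chaining three-balls inequalities from \cite{RS17}, and optimize a free height parameter to convert the chained H\"older gain into a logarithmic estimate, all while tracking the dependence of the constants on $M$ and on the geometry of $W$. The only minor discrepancy is cosmetic: what you describe as ``interpolation against the a priori $H^s(\mathbb R^n)$ bound'' is realized in the paper as the balancing of the boundary-layer term $h^\beta$ against the propagated term $h^{-10n}(\eta/E)^{h^{C|\log\alpha|}}$ in the weighted extension norm, followed by a trace inequality, rather than a direct interpolation in $x$-space.
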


In the case of a local perturbation $\Psi$, Theorem \ref{logestimate} can be considered a quantitative version of the known UCP result for the locally perturbed fractional Laplacian. However, since we deal with nonlocal perturbations, in our case it is not possible to deduce the UCP for $(-\Delta)^s+\Psi$ directly from Theorem \ref{logestimate}. We shall instead provide an independent proof.
\vspace{2mm}

In the special case when $\Psi$ is a multiplication operator on $L^2(\Omega)$, Theorem \ref{logestimate} reduces to Theorem 1.3 in \cite{RS17}. Given that the proof of the cited theorem uses only the boundedness of $\Psi$, the proof for general $\Psi\in B(L^2(\Omega),L^2(\Omega))$ does not differ substantially from the one of the special case. However, for the sake of completeness, we sketch it in the Appendix.
\vspace{2mm}

It is also interesting (while not essential for our discussion) to highlight how the constants $c,\sigma$ depend on the sets and parameters of the Theorem, in particular $M$ and $W$. We postpone this discussion to the Appendix as well.

\vspace{2mm}

\section{Well-posedness \& DN maps}\label{sec:wellposedness-DN}
 
In this section we show the well-posedness of the direct problem
\begin{equation}
\label{directproblem}
\begin{split}
    (-\Delta)^s u+\Psi u & =F \mbox{ in } \Omega \\
    u & =f \mbox{ in } \Omega_e
\end{split}\;,
\end{equation}
where the perturbation $\Psi$ is assumed to be in $B(H^s,H^{-s})$. Observe in particular that in this section we are not making any quasilocality assumption. 
\\

In order to study the well-posedness of the direct problem \eqref{directproblem}, we also define the bilinear form associated to it, that is
$$ B_\Psi (v,w) := \langle (-\Delta)^{s/2} v, (-\Delta)^{s/2}w \rangle + \langle \Psi v,w \rangle \;.$$
The above definition makes sense for $v,w\in C^\infty_c(\mathbb R^n)$, but it can be extended to $v,w\in H^s$ by means of the following boundedness lemma:

\begin{Lem}[Boundedness lemma]\label{boundedness}
Let $\Psi\in B(H^s,H^{-s})$. The bilinear form $B_\Psi$ can be extended to act on $H^s\times H^s$.
\end{Lem}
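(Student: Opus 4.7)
The plan is to show that each of the two terms defining $B_\Psi$ is a continuous bilinear form on $H^s \times H^s$, from which the extension follows automatically by density of $C^\infty_c(\mathbb R^n)$ in $H^s$. The bilinearity is obvious, so the only real content is a continuity estimate of the form
\[
|B_\Psi(v,w)| \leq C\|v\|_{H^s}\|w\|_{H^s}, \qquad v,w\in C^\infty_c(\mathbb R^n),
\]
with a constant depending only on $n,s$ and $\|\Psi\|$. Given such an estimate, the unique continuous extension to $H^s\times H^s$ is standard.

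For the first term, I would use that $(-\Delta)^{s/2}\colon H^s(\mathbb R^n)\to L^2(\mathbb R^n)$ is bounded by the mapping properties of the fractional Laplacian recalled in Section \ref{sec:pre} (namely $(-\Delta)^s\colon H^r\to H^{r-2s}$ for all $r\in\mathbb R$, applied with the exponent $s/2$ iterated appropriately, or equivalently directly from the Fourier symbol $|\xi|^s\langle\xi\rangle^{-s}\in L^\infty$). Then an application of the Cauchy--Schwarz inequality on $L^2(\mathbb R^n)$ yields
\[
\big|\langle(-\Delta)^{s/2}v,(-\Delta)^{s/2}w\rangle\big|\leq \|(-\Delta)^{s/2}v\|_{L^2}\|(-\Delta)^{s/2}w\|_{L^2}\leq C\|v\|_{H^s}\|w\|_{H^s}.
\]

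For the second term, the pairing $\langle \Psi v,w\rangle$ on smooth functions is to be reinterpreted as the $H^{-s}$--$H^s$ duality pairing, which is legitimate because $\Psi v\in H^{-s}$ by hypothesis and $w\in H^s$. This reinterpretation is consistent with the $L^2$-pairing whenever both sides make sense, so it agrees on the dense subspace $C^\infty_c(\mathbb R^n)\times C^\infty_c(\mathbb R^n)$. Then
\[
|\langle \Psi v,w\rangle|\leq \|\Psi v\|_{H^{-s}}\|w\|_{H^s}\leq \|\Psi\|\,\|v\|_{H^s}\|w\|_{H^s}.
\]

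Combining the two estimates gives the desired continuity with constant $C+\|\Psi\|$, and density of $C^\infty_c(\mathbb R^n)$ in $H^s(\mathbb R^n)$ then produces the unique bounded bilinear extension to $H^s\times H^s$. There is no serious obstacle here; the only subtle point worth emphasizing is the reinterpretation of $\langle \Psi v,w\rangle$ as a duality pairing rather than an $L^2$ inner product, since for $v\in H^s$ the image $\Psi v$ need not lie in $L^2$.
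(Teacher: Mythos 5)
Your proof is correct and takes essentially the same approach as the paper: the perturbation term is bounded identically via the $H^{-s}$--$H^s$ duality and $\|\Psi\|$, and for the leading term the paper simply moves both factors of $(-\Delta)^{s/2}$ onto $v$ and uses the same duality pairing, whereas you keep the split symmetric and apply Cauchy--Schwarz in $L^2$ -- a cosmetic difference.
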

\begin{proof}
The proof is just an easy computation:
\begin{align*} |B_\Psi(v,w)| & \leq |\langle (-\Delta)^{s} v, w \rangle| + |\langle \Psi v, w \rangle| \leq (\|(-\Delta)^sv\|_{H^{-s}} + \|\Psi v\|_{H^{-s}} )\|w\|_{H^s} \\ & \leq ( 1 + \|\Psi\|_{B(H^s,H^{-s})} )\|v\|_{H^{s}}\|w\|_{H^s} \leq C \|v\|_{H^{s}}\|w\|_{H^s}\;.
\end{align*}
\end{proof}

In the next coercivity lemma we prove another property of the bilinear form $B_\Psi$, which is fundamental in the proof of well-posedness.

\begin{Lem}[Coercivity lemma]\label{coercivity}
Let $\Psi\in B_0(H^s,H^{-s})$. The bilinear form $B_\Psi$ is coercive, that is there are constants $c_0, c_1 >0$ such that
$$ B_\Psi(v,v) \geq c_0\|v\|_{H^s}^2-c_1\|v\|_{L^2}^2 $$
for all $v\in \widetilde H^s(\Omega)$. 
\end{Lem}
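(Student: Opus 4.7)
The plan is to combine two standard ingredients: a Poincar\'e-type lower bound for the principal term $\|(-\Delta)^{s/2}v\|_{L^2}^2$, and the decomposition $\Psi = \Psi_1 + \Psi_2$ guaranteed by the assumption $\Psi\in B_0(H^s,H^{-s})$, where the high-regularity part $\Psi_2$ has small operator norm on $H^s$ and the $L^2$-bounded part $\Psi_1$ only contributes a lower-order term.

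First, I would establish the coercivity of the principal part on $\widetilde H^s(\Omega)$. Since $\Omega$ is bounded, any $v\in\widetilde H^s(\Omega)\subset H^s_{\overline\Omega}(\mathbb R^n)$ has support in the compact set $\overline\Omega$, so the Poincar\'e inequality from Section \ref{sec:pre} yields a constant $c>0$ (depending only on $n,s,\overline\Omega$) with $\|v\|_{L^2}\leq c\,\|(-\Delta)^{s/2}v\|_{L^2}$. Combining this with the obvious equivalence $\|v\|_{H^s}^2 \asymp \|v\|_{L^2}^2 + \|(-\Delta)^{s/2}v\|_{L^2}^2$ (immediate from the Fourier-side expressions $\langle\xi\rangle^{2s}$ vs.\ $1+|\xi|^{2s}$), one obtains a constant $c_P>0$ such that
\begin{equation*}
\|(-\Delta)^{s/2}v\|_{L^2}^2 \geq c_P\|v\|_{H^s}^2 \qquad\text{for all } v\in\widetilde H^s(\Omega).
\end{equation*}

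Next, I would control the perturbation term by exploiting $\Psi\in B_0(H^s,H^{-s})$. Choosing $\epsilon := c_P/2$, by definition of $B_0$ I can decompose $\Psi = \Psi_1 + \Psi_2$ with $\Psi_1\in B(L^2,L^2)$ and $\|\Psi_2\|_{B(H^s,H^{-s})}<\epsilon$. Then
\begin{equation*}
|\langle\Psi_2 v,v\rangle|\leq \|\Psi_2\|_{B(H^s,H^{-s})}\|v\|_{H^s}^2 < \tfrac{c_P}{2}\|v\|_{H^s}^2,
\qquad |\langle\Psi_1 v,v\rangle|\leq \|\Psi_1\|_{B(L^2,L^2)}\|v\|_{L^2}^2.
\end{equation*}

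Finally, I would assemble the pieces. Using the Poincar\'e lower bound and the two estimates above,
\begin{equation*}
B_\Psi(v,v) \;\geq\; \|(-\Delta)^{s/2}v\|_{L^2}^2 - |\langle\Psi_2 v,v\rangle| - |\langle\Psi_1 v,v\rangle| \;\geq\; \tfrac{c_P}{2}\|v\|_{H^s}^2 - \|\Psi_1\|_{B(L^2,L^2)}\|v\|_{L^2}^2,
\end{equation*}
so the claim holds with $c_0 := c_P/2$ and $c_1 := \|\Psi_1\|_{B(L^2,L^2)}$. No step is really hard here; the only place where care is required is the use of the $B_0$-decomposition, which is precisely the reason the hypothesis is stated in terms of $B_0(H^s,H^{-s})$ rather than the larger space $B(H^s,H^{-s})$: without the smallness of $\Psi_2$ on $H^s$, the perturbation could absorb all of the ellipticity of $(-\Delta)^{s/2}$ and coercivity would fail in general.
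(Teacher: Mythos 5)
Your proof is correct and follows essentially the same route as the paper: estimate $\langle\Psi v,v\rangle$ via the $B_0$-decomposition $\Psi=\Psi_1+\Psi_2$ with $\|\Psi_2\|$ small, and absorb the small $H^s$-term into the ellipticity of $(-\Delta)^{s/2}$ furnished by the fractional Poincar\'e inequality on $\widetilde H^s(\Omega)$. The only cosmetic difference is that you fix $\epsilon=c_P/2$ up front, while the paper carries $\epsilon$ along and chooses it small at the end; the substance is identical.
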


\begin{proof}
We need to estimate the perturbation term $|\langle \Psi v, v \rangle|$ for $v\in H^s$. Since $\Psi\in B_0(H^s,H^{-s})$, by the definition of this space for every $\epsilon>0$ there exist $\Psi_1\in B(L^2,L^2)$ and $\Psi_2\in B(H^s,H^{-s})$ such that 
$$ \Psi = \Psi_1+\Psi_2\;, \qquad \|\Psi_2\|_{B(H^s,H^{-s})}< \epsilon\;. $$ Of course $c_1 := \|\Psi_1\|_{B(L^2,L^2)}> 0$ will also be a (possibly increasing) function of $\epsilon$. Thus
\begin{align*}
    |\langle \Psi v,v\rangle| \leq |\langle \Psi_1 v,v\rangle| + |\langle \Psi_2 v,v\rangle| \leq c_1\|v\|_{L^2}^2 + \epsilon\|v\|_{H^s}^2\;
\end{align*}
holds. Observe now that $\widetilde H^s(\Omega) \subset H^s_{\overline \Omega}(\mathbb R^n)$, and since $\Omega$ is bounded, $\overline\Omega$ is compact. This means that we can apply the fractional Poincar\'e inequality, which implies
\begin{align*}
    B_\Psi(v,v) & \geq \|(-\Delta)^{s/2} v\|_{L^2}^2 - |\langle \Psi v,v\rangle| \\ & \geq c_\Omega(\|(-\Delta)^{s/2} v\|_{L^2}^2 + \|v\|_{L^2}^2) - c_1\|v\|_{L^2}^2 - \epsilon\|v\|_{H^s}^2 \\ & \geq (c_\Omega-\epsilon)\|v\|_{H^s}^2 - c_1\|v\|_{L^2}^2 \;.
\end{align*}
Given that $\epsilon$ can be arbitrarily small, we get the wanted result by letting $c_0:=c_\Omega-\epsilon > 0$.
\end{proof}

With the two lemmas above, we can prove the well-posedness of the direct problem \eqref{directproblem}:

\begin{Pro}\label{wellposed}
Let $\Omega\subset\mathbb R^n$ be a bounded open set and $s\in(0,1)$. Assume $\Psi\in B_0(H^s,H^{-s})$, and let $c_1$ be the coercivity constant from Lemma \ref{coercivity}. There exists a 
countable set $\Sigma= \{\lambda_j\}_{j=1}^{\mathbb N}\subset (-c_1,\infty)$ with $\lambda_1 \leq \lambda_2 \leq ... \rightarrow \infty$ such that if $\lambda \in \mathbb R\setminus \Sigma$, then for any $f\in H^s$ and $F\in (\widetilde H^s(\Omega))^*$ there exists a unique $u\in H^s$ such that $u - f \in \widetilde H^s(\Omega)$ and $$B_\Psi (u,v)-\lambda \langle u, v\rangle_\Omega = F(v)\;,\qquad \mbox{for all } v\in \widetilde H^s(\Omega)\;.$$
One also has the estimate $$\|u\|_{H^s}\leq C(\|f\|_{H^s} + \|F\|_{(\widetilde H^s(\Omega))^*})\;.$$
\end{Pro}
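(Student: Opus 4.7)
The plan is to run a Fredholm alternative argument on a compact perturbation of a coercive problem. First I would reduce to zero exterior datum: writing $u=w+f$ with $w\in\widetilde H^s(\Omega)$, the problem becomes finding $w\in\widetilde H^s(\Omega)$ such that
\begin{equation*}
B_\Psi(w,v)-\lambda\langle w,v\rangle_\Omega=\tilde F(v):=F(v)-B_\Psi(f,v)+\lambda\langle f,v\rangle_\Omega,\quad\forall v\in\widetilde H^s(\Omega),
\end{equation*}
and by Lemma \ref{boundedness} one has $\tilde F\in(\widetilde H^s(\Omega))^*$ with $\|\tilde F\|_{(\widetilde H^s(\Omega))^*}\leq C(1+|\lambda|)(\|F\|_{(\widetilde H^s(\Omega))^*}+\|f\|_{H^s})$.

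Next, I would shift by the constant $c_1$ from Lemma \ref{coercivity} to obtain the form $\tilde B(v,w):=B_\Psi(v,w)+c_1\langle v,w\rangle_\Omega$, which is bounded on $H^s\times H^s$ and satisfies $\tilde B(v,v)\geq c_0\|v\|_{H^s}^2$ on $\widetilde H^s(\Omega)$. By Lax--Milgram there is a bounded linear isomorphism $G\colon(\widetilde H^s(\Omega))^*\to\widetilde H^s(\Omega)$ with $\tilde B(Gh,v)=h(v)$. Denoting by $\iota$ the natural inclusion $\widetilde H^s(\Omega)\hookrightarrow(\widetilde H^s(\Omega))^*$ induced by the $L^2(\Omega)$-pairing, the problem for $w$ rewrites as
\begin{equation*}
(I-(\lambda+c_1)K)w=G\tilde F,\qquad K:=G\circ\iota.
\end{equation*}
Since $\Omega$ is bounded, Rellich's theorem gives a compact embedding $\widetilde H^s(\Omega)\hookrightarrow L^2(\Omega)$; consequently $\iota$ is compact as a map into $(\widetilde H^s(\Omega))^*$, and so is $K$ on $\widetilde H^s(\Omega)$.

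At this point I would invoke the Riesz--Schauder theory for the compact operator $K$: either $I-(\lambda+c_1)K$ is a bijection, in which case the bounded inverse theorem combined with the estimate on $\tilde F$ yields $\|w\|_{H^s}\leq C_\lambda(\|F\|_{(\widetilde H^s(\Omega))^*}+\|f\|_{H^s})$ and therefore the claimed a priori bound on $u=w+f$; or else $(\lambda+c_1)^{-1}$ is an eigenvalue of $K$. Since the eigenvalues of a compact operator form a countable set accumulating only at $0$, the real values of $\lambda$ falling into the exceptional case form a countable set $\Sigma$ whose only possible accumulation point is at infinity. Moreover, for every $\lambda\leq -c_1$ the coercivity estimate
\begin{equation*}
B_\Psi(v,v)-\lambda\|v\|_{L^2(\Omega)}^2\geq c_0\|v\|_{H^s}^2+(-c_1-\lambda)\|v\|_{L^2(\Omega)}^2\geq c_0\|v\|_{H^s}^2
\end{equation*}
holds, so Lax--Milgram applies directly to the unshifted problem and forces $\lambda\notin\Sigma$; hence $\Sigma\subset(-c_1,\infty)$, and enumerating its elements in non-decreasing order gives $\lambda_j\to+\infty$.

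The main obstacle I anticipate is the lack of symmetry: $\Psi\in B_0(H^s,H^{-s})$ is not assumed to be self-adjoint, so $K$ need not be self-adjoint either, and the exceptional set cannot be obtained by diagonalisation. One must therefore appeal to Riesz--Schauder spectral theory for general compact operators and restrict to the real eigenvalues of $K$ (the only ones that can produce a real exceptional $\lambda$), verifying that on every bounded subinterval of $(-c_1,\infty)$ only finitely many of them occur. Beyond this point, the argument is routine Lax--Milgram and Fredholm-alternative bookkeeping, using Lemmas \ref{boundedness} and \ref{coercivity} as the only nontrivial inputs.
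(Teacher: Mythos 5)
Your argument is correct, and it follows the paper's overall outline (homogenize, shift by $c_1$, invert the coercive form, recast as a compact perturbation of the identity, enumerate the exceptional set). Where you diverge — and improve on the paper's sketch — is in the choice of abstract tools. The paper's proof asserts that $B_\Psi(\cdot,\cdot)+c_1\langle\cdot,\cdot\rangle$ is an ``equivalent inner product'' on $\widetilde H^s(\Omega)$ and then invokes the Riesz representation theorem and the spectral theorem for the induced operator on $L^2(\Omega)$; but since $\Psi\in B_0(H^s,H^{-s})$ is not assumed self-adjoint, $B_\Psi$ is generally non-symmetric, so this form is \emph{not} an inner product and the spectral theorem does not directly apply. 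Your substitution of Lax--Milgram (which requires only boundedness and coercivity, not symmetry) and Riesz--Schauder / Fredholm theory for the compact operator $K=G\circ\iota$ is the correct repair, and your remark that one must restrict attention to the real eigenvalues of $K$ when extracting the set $\Sigma\subset(-c_1,\infty)$ is exactly the point that the paper's sketch glosses over. Your observation that $\lambda\leq -c_1$ keeps the full problem coercive, forcing $\Sigma\subset(-c_1,\infty)$, matches the paper. In short: same skeleton, but you carry out the non-self-adjoint bookkeeping that the paper's proof (which leans on \cite{GSU20}, \cite{CMRU20}) leaves implicit.
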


\begin{proof}
The proof goes along the same lines as the analogous statements in \cite{GSU20} and \cite{CMRU20}. First of all, we homogenize the problem by assuming $\widetilde u := u-f$ and $\widetilde F := F-B_\Psi(f,\cdot) + \lambda \langle f,\cdot \rangle_\Omega$. Then, given the coercivity lemma, we know that $B_\Psi(\cdot,\cdot)+c_1\langle \cdot,\cdot\rangle$ defines an equivalent inner product on $\widetilde H^s(\Omega)$. This allows us to define a bounded linear operator $G_\Psi: (\widetilde H^s(\Omega))^* \rightarrow \widetilde H^s(\Omega)$ associating elements of $(\widetilde H^s(\Omega))^*$ to their Riesz representatives in said inner product. Eventually, we let the solution be $\widetilde u:=G_\Psi(\widetilde F)\in \widetilde H^s(\Omega)$. The remaining claims follow by the spectral theorem for the compact induced operator $\widetilde G_\Psi: L^2(\Omega) \rightarrow L^2(\Omega)$.
\end{proof}

The well-posedness for our problem is thus granted as soon as $0\not\in\Sigma$. We observe that the set $\Sigma$ of the eigenvalues depends on $\Psi$. Thus from now on we shall assume that $\Psi$ is such that 
\begin{equation}\label{eigenvalues}
    \mbox{if } u\in H^s(\mathbb R^n) \mbox{ solves } (-\Delta)^su+\Psi u =0 \mbox{ in } \Omega \mbox{ and } u|_{\Omega_e}=0\;, \mbox{ then } u\equiv 0\;.
\end{equation} 
With this in mind, we can define the Poisson operator $P_\Psi : H^s(\mathbb R^n)\rightarrow H^s(\mathbb R^n)$ associating each exterior value $f\in H^s$ to the unique solution $u$ to \eqref{directproblem} corresponding to it. Here we assume $F\equiv 0$. 
 
\begin{Rem}
\label{wellposedadjoint} 
By Lemma \ref{quasiprops}, everything we have proved above for the perturbation $\Psi$ holds for $\Psi^*$ as well. If we assume that the statement in \eqref{eigenvalues} also holds for $\Psi^*$, that is 
\begin{equation*}
    \mbox{if } u\in H^s(\mathbb R^n) \mbox{ solves } (-\Delta)^su+\Psi^* u =0 \mbox{ in } \Omega \mbox{ and } u|_{\Omega_e}=0\;, \mbox{ then } u\equiv 0\;,
\end{equation*}
then the Poisson operator $P_{\Psi^*}: H^s(\mathbb R^n)\rightarrow H^s(\mathbb R^n)$ is also well-defined.
\end{Rem}

Recall the definition of the abstract space of exterior values $X:= H^s(\mathbb R^n)/\widetilde H^s(\Omega)$. It can of course be equipped with the usual quotient norm. The elements of $X$ are equivalence classes of $H^s$ functions which coincide on $\Omega$, and thus it is natural to think of $X$ as $H^s(\Omega_e)$. When the boundary of $\Omega$ is Lipschitz, the identification $X=H^s(\Omega_e)$ can actually be rigorously shown.
\vspace{3mm}

Given the well-posedness of the direct problem, we can turn to the definition of the DN maps. This is done in the following proposition.

\begin{Pro}
\label{DNmaps}
Let $\Omega\subset\mathbb R^n$ be a bounded open set. Let $s\in(0,1)$, and assume $\Psi\in B_0(H^s,H^{-s})$. There exist two continuous linear maps
$$\Lambda_{\Psi}: X \rightarrow X^* \quad \mbox{defined by} \quad \langle \Lambda_\Psi[f],[g] \rangle:= B_\Psi(P_\Psi f,g)$$
and
$$\Lambda_{\Psi^*}: X \rightarrow X^* \quad \mbox{defined by} \quad \langle \Lambda_{\Psi^*}[f],[g] \rangle:= B_{\Psi^*}(P_{\Psi^*} f,g)\;.$$
Moreover, we have $(\Lambda_\Psi)^* = \Lambda_{\Psi^*}$.
\end{Pro}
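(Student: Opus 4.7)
The plan is to verify three items in turn: (i) the assignments $[f],[g] \mapsto B_\Psi(P_\Psi f,g)$ and $[f],[g] \mapsto B_{\Psi^*}(P_{\Psi^*} f,g)$ do not depend on the chosen representatives, so they descend to bilinear forms on $X \times X$; (ii) the induced maps $\Lambda_\Psi, \Lambda_{\Psi^*} : X \to X^*$ are continuous; (iii) the adjoint identity $(\Lambda_\Psi)^* = \Lambda_{\Psi^*}$ holds.

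For (i), I would first show that $P_\Psi$ itself factors through the quotient: if $f_1 - f_2 \in \widetilde H^s(\Omega)$, then $P_\Psi f_1 - P_\Psi f_2$ lies in $\widetilde H^s(\Omega)$ and satisfies $B_\Psi(P_\Psi f_1 - P_\Psi f_2, v) = 0$ for every $v \in \widetilde H^s(\Omega)$, so the assumed injectivity \eqref{eigenvalues} combined with Proposition \ref{wellposed} forces $P_\Psi f_1 = P_\Psi f_2$. Independence from the representative of $[g]$ is then immediate from the defining Galerkin identity $B_\Psi(P_\Psi f, v) = 0$ for $v \in \widetilde H^s(\Omega)$. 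The corresponding statements for $\Lambda_{\Psi^*}$ are justified by Remark \ref{wellposedadjoint}, whose hypotheses are in turn guaranteed by parts 3 and 4 of Lemma \ref{quasiprops} ensuring $\Psi^* \in B_0(H^s, H^{-s})$.

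For (ii), the boundedness Lemma \ref{boundedness} gives $|B_\Psi(P_\Psi f, g)| \leq C \|P_\Psi f\|_{H^s}\|g\|_{H^s}$, and the energy estimate from Proposition \ref{wellposed} (applied with $F = 0$) yields $\|P_\Psi f\|_{H^s} \leq C\|f\|_{H^s}$. Combining these and passing to the infimum over representatives of $[f]$ and $[g]$ produces the bound $|\langle \Lambda_\Psi [f], [g]\rangle| \leq C \|[f]\|_X \|[g]\|_X$; the same argument, with $\Psi$ replaced by $\Psi^*$, handles $\Lambda_{\Psi^*}$.

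For (iii), it suffices to establish $B_\Psi(P_\Psi f, g) = B_{\Psi^*}(P_{\Psi^*} g, f)$ for arbitrary $f, g \in H^s(\mathbb{R}^n)$. Setting $u := P_\Psi f$ and $w := P_{\Psi^*} g$, the memberships $g - w,\, f - u \in \widetilde H^s(\Omega)$ together with the respective Galerkin identities for $u$ and $w$ give $B_\Psi(u, g) = B_\Psi(u, w)$ and $B_{\Psi^*}(w, f) = B_{\Psi^*}(w, u)$, reducing the claim to the symmetry $B_\Psi(u, w) = B_{\Psi^*}(w, u)$. Expanding, the principal term $\langle (-\Delta)^{s/2} u, (-\Delta)^{s/2} w\rangle$ is symmetric by symmetry of the $L^2$ pairing, while the perturbation terms match by definition of the adjoint, $\langle \Psi^* w, u \rangle = \langle w, \Psi u \rangle = \langle \Psi u, w \rangle$. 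I do not expect a genuine obstacle here; the subtlest point is merely bookkeeping, namely checking that the duality pairing between $X$ and $X^*$ inherited from $H^s(\mathbb{R}^n)$ is compatible with the passage to equivalence classes, which is precisely what was verified in step (i).
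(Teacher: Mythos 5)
Your proof is correct and follows essentially the same route as the paper: well-definedness from unique solvability plus the Galerkin identity, continuity from the boundedness lemma combined with the well-posedness estimate, and the adjoint identity by testing against $P_\Psi g$ and $P_{\Psi^*} f$ and exploiting the symmetry $\langle \Psi^* w, u\rangle = \langle \Psi u, w\rangle$. The only difference is presentational — you make the factorization of $P_\Psi$ through the quotient explicit — but the underlying argument is identical to the paper's.
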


\begin{proof}
The proof goes along the same lines as the analogous statements in \cite{GSU20} and \cite{CMRU20}. Observe that if $[f]\in X$, then $[f]=f+\widetilde H^s(\Omega)$, and thus by well-posedness $P_\Psi f' = P_\Psi f$ for all $f'\in[f]$. Moreover, $B_\Psi(P_\Psi f,g')=B_\Psi(P_\Psi f,g)$ for all $g'\in [g]$ by definition of $P_\Psi$. Therefore, the DN map $\Lambda_\Psi$ is well-defined. Its continuity is an immediate consequence of the boundeness Lemma \ref{boundedness}. Given Lemma \ref{quasiprops} and Remark \ref{wellposedadjoint}, the DN map $\Lambda_{\Psi^*}$ is also well-defined, linear and continuous. 
\vspace{3mm}

To see the last equality, observe that $P_{\Psi}g \in [g]$ and $P_{\Psi^*}f \in [f]$  for all $f,g\in H^s(\mathbb R^n)$. Therefore, using the definition of the bilinear form,
\begin{align*}
\langle\Lambda_{\Psi^*}[f],[g] \rangle & = \langle\Lambda_{\Psi^*}[f],[P_{\Psi}g] \rangle = B_{\Psi^*}(P_{\Psi^*} f,P_{\Psi}g) \\ & = \langle (-\Delta)^{s/2} P_{\Psi^*} f, (-\Delta)^{s/2}P_{\Psi}g \rangle + \langle \Psi^* P_{\Psi^*} f,P_{\Psi}g \rangle \\ & = \langle (-\Delta)^{s/2}P_{\Psi}g,(-\Delta)^{s/2} P_{\Psi^*} f \rangle + \langle  \Psi P_{\Psi}g,P_{\Psi^*} f \rangle \\ & = B_\Psi (P_\Psi g, P_{\Psi^*}f) = \langle \Lambda_\Psi [g], [P_{\Psi^*}f] \rangle = \langle \Lambda_\Psi [g], [f] \rangle = \langle  (\Lambda_\Psi)^*[f],[g] \rangle\;.
\end{align*}
\end{proof}

\section{The UCP and Runge approximation property for quasilocal perturbations}\label{sec:UCP-Runge}
\subsection{The unique continuation property}
In this section we are interested in the study of the unique continuation property (UCP) and Runge approximation property for the perturbed Schr\"odinger operator $(-\Delta)^s+\Psi$, where $\Psi$ in quasilocal. Here we do not make use of the assumption that $\Psi\in B(H^s,H^{-s})$ can be approximated by bounded operators of $L^2$ on itself. 

\begin{The}[UCP for $(-\Delta)^s + \Psi$, $\Psi$ of finite propagation]\label{th:finite-propagation} Let $\Omega,W\subset\mathbb R^n$ be non-empty open sets such that $u\in\widetilde H^s(\Omega)$, $((-\Delta)^su + \Psi u)|_W=0$ and $W \cap N(\Omega,p(\Psi))_e\neq \emptyset$. Then $u\equiv 0$.
\end{The}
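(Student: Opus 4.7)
The plan is to localize the equation to an open subset on which the nonlocal perturbation vanishes identically, so that only $(-\Delta)^s u = 0$ remains and the classical UCP for the fractional Laplacian (Lemma \ref{ucpfraclap}) applies. The geometric hypothesis $W \cap N(\Omega,p(\Psi))_e \neq \emptyset$ is precisely designed to furnish such a subset.

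First I would note that since $u\in\widetilde H^s(\Omega)$ we have $\spt u \subseteq \overline\Omega$, so in particular $u|_{\Omega_e}=0$. Next, using that $\Psi$ has finite propagation with $R:=p(\Psi)$, and that the definition of $N(\cdot,R)$ depends only on the closure, one obtains
\[
\spt(\Psi u) \;\subseteq\; N(\spt u, R) \;\subseteq\; N(\overline\Omega, R) \;=\; N(\Omega,R).
\]
(A small verification is needed here: the infimum in the definition of $p(\Psi)$ is attained, because the sets $N(\spt u,R)$ form a nested family of closed sets in $R$, so their intersection over $R>p(\Psi)$ equals $N(\spt u,p(\Psi))$.)

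Now set $V := W \cap N(\Omega,R)_e$. This set is open (as an intersection of two open sets) and non-empty by hypothesis. On $V$ one has $\Psi u = 0$, because $V$ is disjoint from $\spt(\Psi u)$. Combining this with $((-\Delta)^s u + \Psi u)|_W = 0$ and $V \subseteq W$ gives $(-\Delta)^s u|_V = 0$. On the other hand, $V \subseteq N(\Omega,R)_e \subseteq \Omega_e$, so also $u|_V = 0$. Applying Lemma \ref{ucpfraclap} on the non-empty open set $V$ yields $u\equiv 0$, completing the proof.

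The argument is essentially structural and I do not expect any real obstacle, since the hard analytic ingredient (the UCP for $(-\Delta)^s$) is already available; the only point that requires a moment's care is tracking the supports so that the finite-propagation bound cleanly isolates an open neighbourhood where the nonlocal term disappears. This will no longer be possible in the genuinely quasilocal setting of Theorem \ref{th:quasilocal}, where $\Psi u$ is never identically zero away from $\Omega$ but only small, forcing the substantially more delicate quantitative argument developed later in the paper.
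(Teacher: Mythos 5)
Your proof is correct and follows essentially the same route as the paper: isolate the open set $V = W\cap N(\Omega,p(\Psi))_e$, observe that both $u$ and $\Psi u$ vanish there by the support considerations, and invoke the UCP for $(-\Delta)^s$. The only addition is the parenthetical verification that the infimum defining $p(\Psi)$ is attained, which the paper uses implicitly; your nested-closed-sets argument for this is correct and a welcome bit of care.
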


\begin{proof}
Let $W':= W \cap N(\Omega,p(\Psi))_e$. 
Given that $u\in \widetilde H^s(\Omega)$, we have $u|_{W'}=0$. Moreover, since $\Psi$ has finite propagation, we have that spt$(\Psi u) \subseteq N(\spt u,p(\Psi)) \subseteq N(\Omega,p(\Psi))$. Therefore it must be $(\Psi u)|_{N(\Omega,p(\Psi))_e}=0$. Then we have $$ (-\Delta)^s u =0\;,\quad u=0\quad\quad \mbox{in } W'\;. $$ Also, $W'$ is non-empty by the geometric assumption and open by definition. Now the UCP for the fractional Laplacian (Lemma \ref{ucpfraclap}, see also \cite{GSU20}) gives $u\equiv 0$.
\end{proof}

In what follows we shall adapt the proof of Theorem \ref{th:finite-propagation} to the case when $\Psi$ is quasilocal. An obstacle of geometric nature arises immediately: the set $W\cap N(\Omega,r)_e$ should now be non-empty for all $r>0$, that is, $W$ should be unbounded. Moreover, since the quasilocality condition, as opposed to the finite propagation one, is given in the form of an inequality, we need to make use of the quantitative version of the UCP result for the fractional Laplacian from \cite{RS17}. This has the effect of introducing a condition on the decay function $\mu$. 
\vspace{3mm}

Before proceeding to the statement and proof of the UCP for quasilocal perturbations, we need to perform the following construction. Assume $W\subset\mathbb R^n$ is an unbounded open set, $\Omega\subset W_e$ is a bounded, open and smooth set, $u\in \widetilde H^s(\Omega)$ and $r>0$ is a fixed real number. Define $W_r:= W\cap N(\Omega,r)_e$ for $r>0$. By the geometric assumption, the set $W_r$ is known to be open and non-empty. Since spt$(u)\subseteq \Omega$, we can observe that $W_r \subseteq W \cap N(\spt u,r)_e$. In particular, $W_r \subseteq W\subset \Omega_e$. 
\noindent Fix now any $M\geq0$. In correspondence to $W_r,\Omega$ and $M$ we can find the two constants $c_M(r),\sigma_M(r)$ from Theorem \ref{logestimate} (check the Appendix for their exact values). Upon increasing $r$, the distance between $\Omega$ and $W_r$ also increases, which causes the estimate in Theorem \ref{logestimate} to become worse. This is quantified in the value of the constant $\sigma_M$, which verifies $\sigma_M(r)\rightarrow 0$ as $r\rightarrow\infty$. 


Next, we shall prove the UCP result for quasilocal perturbations:

 \begin{proof}[Proof of Theorem \ref{UCP_for_quasilocal_perturbation}]
Assume for the sake of contradiction that $u\not\equiv 0$. If we had $\|(-\Delta)^su\|_{L^2(\Omega)}=0$, then by the well-posedness of the Dirichlet problem for the fractional Schr\"odinger equation we could deduce $u\equiv 0$ (see condition (1.1) in \cite{GSU20} for $q=0$). Thus we can assume $\|(-\Delta)^su\|_{L^2(\Omega)}>0$.

\noindent Since $\Psi\in B(H^s,H^{-s})$ is quasilocal, it holds that $$\|\Psi u\|_{H^{-s}(W_{r})} \leq \|\Psi u\|_{H^{-s}(N(\spt u, r)_e)} \leq \mu(r)\|u\|_{H^{s}}\;.$$ Therefore by the assumption $((-\Delta)^su + \Psi u)|_W=0$ we get \begin{align*}
     \|(-\Delta)^s u\|_{H^{-s}(W_{r})} & \leq  \|(-\Delta)^s u + \Psi u\|_{H^{-s}(W_r)} +  \|\Psi u\|_{H^{-s}(W_r)} \\ & \leq \|(-\Delta)^s u + \Psi u\|_{L^{2}(W)}+\|\Psi u\|_{H^{-s}(W_r)}\leq \mu(r)\|u\|_{H^s}\;.
\end{align*} 
Let $u_1 := \frac{u}{\|(-\Delta)^su\|_{L^2(\Omega)}}$. Then $u_1\in \widetilde H^s(\Omega)$, and it solves $(-\Delta)^su_1 = v$ in $\Omega$, with $\|v\|_{L^2(\Omega)} = 1$. By the quantitative UCP for the fractional Laplacian (i.e. our Theorem \ref{logestimate} for $\Psi=0$) and the well-posedness of the direct problem for the fractional Schr\"odinger equation, we can estimate
\begin{equation*}\begin{split} 
  \|u_1\|_{H^s} & \leq C\,\|v\|_{H^{-s}(\Omega)} \leq C\,c_M(r) |\log (\|(-\Delta)^s u_1\|_{H^{-s}(W_{r})})|^{-\sigma_M(r)} \;,
\end{split}\end{equation*}
for all $M\geq 0$, while the previous computations now give
$$ \|(-\Delta)^s u_1\|_{H^{-s}(W_{r})} = \frac{\|(-\Delta)^s u\|_{H^{-s}(W_{r})}}{\|(-\Delta)^su\|_{L^2(\Omega)}} \leq \mu(r)\frac{\| u\|_{H^s}}{\|(-\Delta)^su\|_{L^2(\Omega)}} = \mu(r) \|u_1\|_{H^s} \;.$$
Therefore
$$ \|u_1\|_{H^s}\leq C\, c_M(r)|\log (\mu(r)\|u_1\|_{H^s})|^{-\sigma_M(r)} = C\, c_M(r)|\log \mu(r) + \log\|u_1\|_{H^s}|^{-\sigma_M(r)}\;. $$
Taking the limit $r\rightarrow \infty$, we observe that it must be $\|u_1\|_{H^s}=0$ by the strong decay assumption on $\mu$. We have thus obtained that $u_1\equiv 0$, which entails $u\equiv 0$.
\end{proof}

\begin{Rem}
Condition \eqref{mu-condition} for $\mu$ depends on the particular choice of sets $\Omega, W$ through the functions $c,\sigma$.
It is always trivially verified by perturbations $\Psi$ of finite propagation, since in this case $\mu(r)=0$ for all $r>p(\Psi)$. However, as proved in the examples of Section \ref{sec:examples}, for every couple of sets $\Omega, W$ as in Theorem \ref{th:quasilocal} it is always possible to find perturbations $\Psi$ which are quasilocal, not of finite propagation, and which verify \eqref{mu-condition}. 
\end{Rem}

\begin{Rem}
The result of Theorem \ref{th:finite-propagation} can be generalized to $s\in \mathbb R^+\setminus \mathbb Z$ by making use of the higher order fractional Poincar\'e inequality and UCP from \cite{CMR20}. We expect a similar generalization to hold in the case of Theorem \ref{th:quasilocal} as well; however, this would first require a quantitative UCP result for the higher order fractional Laplacian. 
\end{Rem}

\begin{Rem}\label{landis}
We believe the unique continuation result contained in Theorem \ref{th:quasilocal} is interesting by itself, as it may be related in its methods to the fractional Landis conjecture studied in \cite{RW19}. In such paper, the authors show that a function $u\in\widetilde H^s(\mathbb R^n)$ solving $(-\Delta)^su+qu=0$ in $\mathbb R^n$ must identically vanish as soon as the bounded potential $q$ verifies some regularity conditions and the decay of $u$ at infinity is strong enough, in the sense that there exists $\alpha>1$ such that $\int_{\mathbb R^n}e^{|x|^\alpha}|u|^2dx \leq C <\infty$. Given the boundedness of $q$, this can also be interpreted as a sufficient decay condition for the perturbation $qu$. In our Theorem \ref{th:quasilocal}, the function $u$ is in $\widetilde H^s(\Omega)$, but it is assumed to solve $(-\Delta)^su+\Psi u=0$ in $W\subset \Omega_e$. This condition is much less restrictive, given that it only applies to $W$ rather than $\mathbb R^n$, and the perturbation $\Psi u$ is more general than the term $qu$. The decay of $\Psi u$ in our case is prescribed by the quasilocality condition and assumption \eqref{mu-condition}. As shown in Example 3 from Section \ref{sec:examples}, 
$$  \|\Psi u\|_{H^{-s}(N(\Omega,r)_e)} \leq \mu(r)\|u\|_{H^s} = e^{-(c(r)f(r))^{1/\sigma(r)}}\|u\|_{H^s} \leq e^{- C f(r)^\beta}\|u\|_{H^s}$$
must hold for some function $f: \mathbb R^+ \rightarrow \mathbb R^+_0$ with $\lim_{r\rightarrow \infty}f(r)=\infty$. Here we were able to find $C, \beta>0$ independent of $r$ because of the known behaviours of $c(r), \sigma(r)$ as $r\rightarrow \infty$ (check the Appendix). Thus we see that in both our result and the one in \cite{RW19} the vanishing of a solution $u$ is obtained in the case of sufficient exponential decay of the involved perturbation.  
\end{Rem}

\subsection{The Runge approximation property}
Making use of the UCP results from Theorems \ref{th:finite-propagation} and \ref{th:quasilocal}, we now prove Runge approximation for perturbations which are either of finite propagation or quasilocal. 

\begin{Pro}[Runge approximation for $\Psi$ of finite propagation]\label{Runge-finite-propagation}
Let $W\subset \mathbb R^n$ be open and $\Omega\subset W_e$ be open and bounded with the property that $W \cap N(\Omega,p(\Psi))_e\neq \emptyset$. Then the set
$$ \mathcal{R}:= \{ P_\Psi f-f : f\in C^\infty_c(W) \} $$
is dense in $\widetilde H^s(\Omega)$.
\end{Pro}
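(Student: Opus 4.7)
The plan is to prove density by a Hahn--Banach / duality argument, reducing to a unique continuation statement for the adjoint operator $(-\Delta)^s + \Psi^*$ that falls under Theorem \ref{th:finite-propagation}. By Hahn--Banach, it suffices to show that every continuous linear functional on $\widetilde H^s(\Omega)$ that annihilates $\mathcal{R}$ vanishes identically. Since $(\widetilde H^s(\Omega))^* = H^{-s}(\Omega)$, such a functional is represented by some $v \in H^{-s}(\Omega)$, and the claim becomes: if $\langle v, P_\Psi f - f\rangle = 0$ for every $f \in C^\infty_c(W)$, then $v = 0$.

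To exploit this, I would introduce the adjoint solution: by Proposition \ref{wellposed} applied to $\Psi^* \in B_0(H^s,H^{-s})$ (using Lemma \ref{quasiprops} and Remark \ref{wellposedadjoint} for well-posedness), let $w \in \widetilde H^s(\Omega)$ be the unique solution of
\begin{equation*}
B_{\Psi^*}(w,\varphi) = \langle v, \varphi\rangle \quad \text{for all } \varphi \in \widetilde H^s(\Omega).
\end{equation*}
Since $P_\Psi f - f \in \widetilde H^s(\Omega)$, the testing identity gives $\langle v, P_\Psi f - f\rangle = B_{\Psi^*}(w, P_\Psi f) - B_{\Psi^*}(w, f)$. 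The first term equals $B_\Psi(P_\Psi f, w)$ by the symmetry relation between $B_\Psi$ and $B_{\Psi^*}$ used in the proof of Proposition \ref{DNmaps}, and this vanishes because $w \in \widetilde H^s(\Omega)$ is an admissible test function in the weak formulation of the homogeneous problem solved by $P_\Psi f$. For the second term, using $\spt f \subset W \subset \Omega_e$ and integration of the fractional Laplacian, one gets $B_{\Psi^*}(w, f) = \langle (-\Delta)^s w + \Psi^* w, f\rangle$. Combining, the vanishing hypothesis on $v$ is equivalent to $((-\Delta)^s w + \Psi^* w)|_W = 0$ in the sense of distributions.

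Now I would invoke Theorem \ref{th:finite-propagation} with $\Psi$ replaced by $\Psi^*$ and $u$ replaced by $w$. For this, I first need to verify that $\Psi^*$ has finite propagation with $p(\Psi^*) \le p(\Psi)$: this follows from part 3 of Lemma \ref{quasiprops} applied to the decay function $\mu(r) = \chi_{\{r \le p(\Psi)\}}\|\Psi\|$ inherited from part 1, since then $\|\Psi^* u\|_{H^{-s}(N(\spt u,r)_e)} = 0$ for every $r > p(\Psi)$, forcing $\spt(\Psi^* u) \subseteq N(\spt u, p(\Psi))$. Moreover, the geometric hypothesis is preserved, because $p(\Psi^*) \le p(\Psi)$ implies $N(\Omega, p(\Psi))_e \subseteq N(\Omega, p(\Psi^*))_e$, so $W \cap N(\Omega, p(\Psi^*))_e \supseteq W \cap N(\Omega, p(\Psi))_e \ne \emptyset$. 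Theorem \ref{th:finite-propagation} therefore gives $w \equiv 0$, and from the defining equation $v = (-\Delta)^s w + \Psi^* w = 0$, concluding the proof.

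The only delicate point I expect is the bookkeeping in the integration-by-parts step, specifically confirming that $B_\Psi(P_\Psi f, w) = B_{\Psi^*}(w, P_\Psi f)$ for arbitrary $P_\Psi f \in H^s(\mathbb{R}^n)$ (not necessarily in $\widetilde H^s(\Omega)$) when tested against $w \in \widetilde H^s(\Omega)$; this is exactly the kind of manipulation already carried out for the DN maps in Proposition \ref{DNmaps}, so the reduction is clean. Everything else is standard duality.
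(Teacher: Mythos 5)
Your proposal is correct and follows essentially the same route as the paper: a Hahn--Banach duality argument, solving the adjoint problem $(-\Delta)^s w + \Psi^* w = v$ in $\widetilde H^s(\Omega)$, deducing $((-\Delta)^s w + \Psi^* w)|_W = 0$ from the orthogonality to $\mathcal R$, and concluding $w\equiv 0$ (hence $v=0$) via Theorem \ref{th:finite-propagation} applied to $\Psi^*$. Your explicit verification that $p(\Psi^*)\le p(\Psi)$ via Lemma \ref{quasiprops} is a detail the paper leaves implicit, and it is handled correctly.
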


\begin{proof}
By the Hahn-Banach theorem, it will suffice to prove that any $F\in (\widetilde H^s(\Omega))^*$ such that $\langle F,v\rangle=0$ for all $v\in \mathcal R$ must necessarily vanish. Given $F$ with the required properties, let $\phi\in \widetilde H^s(\Omega)$ be the unique solution to
\begin{equation*}
\begin{array}{rll}
      (-\Delta)^s\phi+\Psi^* \phi  & = -F & \mbox{ in } \Omega \\
      \phi  &= 0 &  \mbox{ in } \Omega_e
\end{array}\;.
\end{equation*}
Then for all $f\in C^\infty_c(W)$ we have
$$ 0 = \langle F,v\rangle = -B^*_\Psi(\phi, P_\Psi f-f) = B^*_\Psi(\phi,f) = \langle (-\Delta)^s\phi+\Psi^*\phi, f \rangle\;,  $$
which by the arbitrariety of $f$ implies $((-\Delta)^s\phi+\Psi^*\phi)|_W=0$. Since the definition of $\phi$ already gives us $\phi|_W=0$, by the geometric assumptions and Theorem \ref{th:finite-propagation} we are allowed to deduce $\phi\equiv 0$ in $\mathbb R^n$. Therefore we conclude that $F$ vanishes.
\end{proof}

\begin{Pro}[Runge approximation for $\Psi$ quasilocal]\label{Runge-quasilocal}
Let $W\subset \mathbb R^n$ be an unbounded open set, and let $\Omega\subset W_e$ be a bounded, open and smooth set. Assume that the decay function $\mu$ of the perturbation $\Psi\in B(L^2,L^2)$ verifies condition \eqref{mu-condition} for some $M\geq 0$. Then the set
$$ \mathcal{R}:= \{ P_\Psi f-f : f\in C^\infty_c(W) \} $$
is dense in $L^2(\Omega)$.
\end{Pro}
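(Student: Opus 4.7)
The plan is to argue by Hahn--Banach duality, paralleling the scheme of Proposition \ref{Runge-finite-propagation} but replacing the finite--propagation UCP with its quasilocal counterpart (Theorem \ref{th:quasilocal}). Since the target space is the self-dual $L^2(\Omega)$, it suffices to show that any $F\in L^2(\Omega)$ satisfying $\langle F, P_\Psi f - f\rangle_{L^2(\Omega)}=0$ for every $f\in C^\infty_c(W)$ must vanish.

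To exploit such an $F$, I would introduce the adjoint state $\phi\in\widetilde H^s(\Omega)$ defined as the unique weak solution of
\begin{equation*}
 (-\Delta)^s\phi+\Psi^*\phi=F\ \text{ in }\ \Omega,\qquad \phi|_{\Omega_e}=0.
\end{equation*}
Well-posedness follows from Proposition \ref{wellposed} applied to $\Psi^*$: the hypothesis $\Psi\in B(L^2,L^2)\subset B_0(H^s,H^{-s})$ together with part 4 of Lemma \ref{quasiprops} yields $\Psi^*\in B_0(H^s,H^{-s})$, and the corresponding eigenvalue condition for $\Psi^*$ is assumed (Remark \ref{wellposedadjoint}). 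Using the identity $B_{\Psi^*}(\phi,v)=B_\Psi(v,\phi)$ for $v\in\widetilde H^s(\Omega)$ (which follows from self-adjointness of $(-\Delta)^{s/2}$ and the definition of the adjoint) together with the vanishing $B_\Psi(P_\Psi f,v)=0$ for such $v$, one computes
\begin{equation*}
 0=\langle F, P_\Psi f-f\rangle = B_{\Psi^*}(\phi, P_\Psi f-f) = -B_\Psi(f,\phi) = -\langle f,(-\Delta)^s\phi+\Psi^*\phi\rangle
\end{equation*}
for every $f\in C^\infty_c(W)$, which yields $((-\Delta)^s\phi+\Psi^*\phi)|_W=0$.

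With this information in hand, I would apply Theorem \ref{th:quasilocal} to $\phi$. The hypotheses are all readily verified: $\phi\in\widetilde H^s(\Omega)$ by construction; rewriting the equation as $(-\Delta)^s\phi=F-\Psi^*\phi$ in $\Omega$, together with $F\in L^2(\Omega)$ and $\Psi^*\phi\in L^2(\mathbb R^n)$ (since $\Psi^*\in B(L^2,L^2)$ and $\phi\in L^2$), gives $\|(-\Delta)^s\phi\|_{L^2(\Omega)}<\infty$; and part 3 of Lemma \ref{quasiprops} shows that $\Psi^*$ is quasilocal with the same decay function $\mu$, so condition \eqref{mu-condition} is preserved, with the same $M$. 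Theorem \ref{th:quasilocal} then forces $\phi\equiv 0$, and substituting back into the adjoint equation gives $F\equiv 0$ in $\Omega$, as desired.

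The argument is essentially the standard Hahn--Banach duality used for Runge-type results; the only mildly delicate step is securing $(-\Delta)^s\phi\in L^2(\Omega)$, which is precisely why the proposition is stated for $\Psi\in B(L^2,L^2)$ rather than the weaker $B_0(H^s,H^{-s})$. All the genuinely nontrivial analytic content has already been packaged into Theorem \ref{th:quasilocal}, so no further obstacle is expected.
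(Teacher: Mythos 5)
Your proof is correct and follows the same Hahn--Banach duality scheme as the paper: introduce the adjoint state $\phi$, derive $((-\Delta)^s\phi+\Psi^*\phi)|_W=0$ from the orthogonality hypothesis, verify $\|(-\Delta)^s\phi\|_{L^2(\Omega)}<\infty$ using $\Psi^*\in B(L^2,L^2)$, and invoke Theorem \ref{th:quasilocal} for $\Psi^*$ (quasilocal with the same $\mu$ by Lemma \ref{quasiprops}). Apart from an immaterial sign convention on the source term in the adjoint problem, this is the paper's argument.
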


\begin{proof}
It follows from Lemma \ref{quasiprops} that $\Psi$ has decay function $\mu$ when regarded as an operator in $B(H^s,H^{-s})$, and the same holds true for its adjoint $\Psi^*\in B(H^s,H^{-s})$.
Given this observation, the rest of the proof goes along the same lines as the previous one. By Hahn-Banach, it is enough to prove that any $F\in L^2(\Omega)$ such that $\langle F,v \rangle$=0 for all $v\in \mathcal R$ vanishes. Considering again $\phi\in \widetilde H^s(\Omega)$ as above, we are lead to $((-\Delta)^s\phi + \Psi^*\phi)|_W = 0$. We also know that
$$ \|(-\Delta)^s\phi\|_{L^2(\Omega)}=\|F+\Psi^*\phi\|_{L^2(\Omega)}\leq \|F\|_{L^2(\Omega)} + \|\Psi^*\phi\|_{L^2} \leq \|F\|_{L^2(\Omega)} + \|\Psi^*\|\,\|\phi\|_{L^2} < \infty \;,$$ because $\Psi^*\in B(L^2,L^2)$ and $\phi\in\widetilde H^s(\Omega)\subset L^2$. Now the UCP result for quasilocal perturbations gives $\phi\equiv 0$, which entails $F\equiv 0$.
\end{proof}

\section{Quantitative Runge approximation}\label{sec:quantiRunge}

In this section we refine our result on Runge approximation for quasilocal perturbations by means of a quantitative estimate. This is reminiscent of Theorems 1.3 and 1.4 from \cite{RS17}.

\begin{The}[Quantitative Runge approximation property]\label{Qrunge}
Let $\Omega\subset\mathbb R^n$ be a bounded, smooth and open set, and let $W\subset\Omega_e$ be unbounded, smooth and open. Assume $s\in (0,1), M>0$ and that $\Psi\in B(L^2,L^2)$ is a quasilocal perturbation with $\|\Psi\|\leq M$ whose decay function $\mu$ satisfies condition \eqref{mu-condition}. Let $v\in \widetilde H^s(\Omega)$ and $\epsilon, \delta \in (0,1)$. There exist a real number $R_\epsilon >0$ and a function $f_\epsilon\in H^s_{\overline W_{R_\epsilon}}$ such that
$$ \|P_\Psi f_\epsilon - v\|_{L^2(\Omega)}\leq \epsilon \|v\|_{H^s_{\overline \Omega}} \qquad \mbox{and} \qquad \|f_{\epsilon}\|_{H^s_{\overline W_{R_\epsilon}}}\leq \mu(R_\epsilon)^{\delta-1}\|v\|_{L^2}\;. $$
\end{The}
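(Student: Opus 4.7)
The strategy is to realize the quantitative approximation as a Tikhonov-type regularization of a compact Runge operator, in the spirit of \cite{RS17}, and then to use hypothesis \eqref{mu-condition} to balance the regularization parameter against the radius $R$. For each $R>0$ I introduce
$$ A_R:H^s_{\overline W_R}(\mathbb R^n)\to L^2(\Omega),\qquad A_R f:=(P_\Psi f)|_\Omega, $$
which is bounded and, by Proposition \ref{Runge-quasilocal}, has dense range whenever \eqref{mu-condition} holds. The plan is to estimate the Hilbert adjoint $A_R^*$ via Theorem \ref{logestimate} and then to invert $A_R$ up to a small residual through spectral calculus on $A_R A_R^*$.

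The first step is the adjoint computation. For $v\in L^2(\Omega)$ let $\phi_v\in\widetilde H^s(\Omega)$ be the unique solution of $(-\Delta)^s\phi_v+\Psi^*\phi_v=v$ in $\Omega$ with $\phi_v|_{\Omega_e}=0$, which exists by Proposition \ref{wellposed} applied to $\Psi^*$ (using Lemma \ref{quasiprops}(4) and Remark \ref{wellposedadjoint}). Since any $f\in H^s_{\overline W_R}$ vanishes on $\Omega$, one has $P_\Psi f-f\in\widetilde H^s(\Omega)$, and the symmetry calculation from the proof of Proposition \ref{DNmaps} yields $B_{\Psi^*}(\phi_v,P_\Psi f)=B_\Psi(P_\Psi f,\phi_v)=0$. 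Testing the identity $B_{\Psi^*}(\phi_v,w)=\langle v,w\rangle_\Omega$ against $w=P_\Psi f-f$ therefore gives
$$ \langle A_R f,v\rangle_{L^2(\Omega)}=-B_{\Psi^*}(\phi_v,f)=-\langle ((-\Delta)^s\phi_v+\Psi^*\phi_v)|_{W_R},f\rangle, $$
so that $A_R^* v=-((-\Delta)^s\phi_v+\Psi^*\phi_v)|_{W_R}$ in $H^{-s}(W_R)$. By quasilocality of $\Psi^*$ (Lemma \ref{quasiprops}(3)) and the well-posedness bound $\|\phi_v\|_{H^s}\leq C\|v\|_{L^2(\Omega)}$, the triangle inequality yields $\|(-\Delta)^s\phi_v\|_{H^{-s}(W_R)}\leq\|A_R^* v\|_{H^{-s}(W_R)}+C\mu(R)\|v\|_{L^2(\Omega)}$.

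Next I apply Theorem \ref{logestimate} to $\phi_v$ (noting $\|\Psi^*\|_{B(L^2,L^2)}=\|\Psi\|_{B(L^2,L^2)}\leq M$), producing a logarithmic modulus of injectivity for $A_R^*$ of the form
$$ \|v\|_{H^{-s}(\Omega)}\leq c_M(R)\bigl|\log\bigl(\|A_R^* v\|_{H^{-s}(W_R)}+C\mu(R)\|v\|_{L^2(\Omega)}\bigr)\bigr|^{-\sigma_M(R)}\|v\|_{L^2(\Omega)}. $$
I then set up the Tikhonov regularization $f_\alpha:=A_R^*(A_RA_R^*+\alpha I)^{-1}v$, which by the spectral theorem for the compact self-adjoint operator $A_RA_R^*$ satisfies the a priori bound $\|f_\alpha\|_{H^s_{\overline W_R}}\leq(2\sqrt\alpha)^{-1}\|v\|_{L^2(\Omega)}$. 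Running the standard functional-analytic argument of \cite{RS17} that converts a logarithmic modulus of injectivity into a quantitative density statement, the displayed estimate translates into a reconstruction error bound of the form $\|A_R f_\alpha-v\|_{L^2(\Omega)}\leq\widetilde c_M(R)|\log\alpha|^{-\sigma_M(R)}\|v\|_{H^s_{\overline\Omega}}$.

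The final and main obstacle is the simultaneous balancing of $\alpha$ and $R$. Setting $\alpha(R):=\mu(R)^{2(1-\delta)}/4$ forces $(2\sqrt{\alpha(R)})^{-1}=\mu(R)^{\delta-1}$, so the norm bound on $f_\epsilon$ required in the statement is satisfied verbatim. With this choice the reconstruction error becomes comparable to $c_M(R)|\log\mu(R)|^{-\sigma_M(R)}$, which by hypothesis \eqref{mu-condition} tends to zero as $R\to\infty$; choosing $R=R_\epsilon$ large enough so that this quantity is at most $\epsilon$ concludes the argument. The delicate point is that the $R$-dependence of $c_M(R)$ and $\sigma_M(R)$ recorded in the Appendix must interact correctly with the decay of $\mu(R)$, and condition \eqref{mu-condition} is precisely calibrated for this interaction to be feasible.
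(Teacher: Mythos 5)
Your plan mirrors the paper's proof closely: define the restricted Poisson operator $A_R: H^s_{\overline W_R} \to L^2(\Omega)$, compute its adjoint via the bilinear form, control it with Theorem \ref{logestimate} together with the quasilocality of $\Psi^*$, and balance the regularization scale against the radius $R$. The adjoint computation, the estimate $\|\Psi^*\phi_v\|_{H^{-s}(W_R)}\leq C\mu(R)\|v\|_{L^2}$, and the choice $\alpha(R)=\mu(R)^{2(1-\delta)}/4$ are all consistent with the paper's Steps 2--4 and with the way condition \eqref{mu-condition-delta} is set up.

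The genuine gap is in the regularization scheme. The paper (following \cite{RS17}) truncates the singular value decomposition: $f_\epsilon = \sum_{\lambda_j>N_\epsilon^2}\lambda_j^{-1/2}\langle v,w_j\rangle\phi_j$ with $N_\epsilon=\mu(R_\epsilon)^{1-\delta}$. This yields both the norm bound $\|f_\epsilon\|\leq N_\epsilon^{-1}\|v\|$ and, essentially by definition of the cutoff, the \emph{ratio} bound $\|A^*r_\epsilon\|_{H^s}/\|r_\epsilon\|_{L^2}\leq N_\epsilon$ for the residual $r_\epsilon = v-Af_\epsilon$, because $r_\epsilon$ is spectrally supported on eigenvalues $\leq N_\epsilon^2$. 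It is precisely this ratio, plus $C\mu(R)$, that gets fed into the logarithm of Theorem \ref{logestimate} to produce $\|r_\epsilon\|_{H^{-s}}\leq\epsilon\|r_\epsilon\|_{L^2}$. Your Tikhonov choice $f_\alpha = A^*(AA^*+\alpha I)^{-1}v$ reproduces the norm bound, but the residual $s_\alpha = -\alpha(AA^*+\alpha I)^{-1}v$ does \emph{not} satisfy $\|A^*s_\alpha\|/\|s_\alpha\|\lesssim\sqrt\alpha$: the ratio
$$ \frac{\|A^*s_\alpha\|^2}{\|s_\alpha\|^2}=\frac{\sum_j\lambda_j\left(\frac{\alpha}{\lambda_j+\alpha}\right)^2\langle v,w_j\rangle^2}{\sum_j\left(\frac{\alpha}{\lambda_j+\alpha}\right)^2\langle v,w_j\rangle^2} $$
is a weighted mean of the $\lambda_j$ and can be of order $\lambda_1$ independently of $\alpha$ (take $v = w_1$). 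What you do control is $\|A^*s_\alpha\|\leq\tfrac12\sqrt\alpha\|v\|$, which introduces the uncontrolled factor $\|v\|/\|s_\alpha\|$ into the logarithm. The phrase ``running the standard functional-analytic argument of \cite{RS17}'' conceals this mismatch, since that argument is stated for and relies on the spectral cutoff, not a Tikhonov filter. One can repair the Tikhonov route by additionally splitting $v$ across eigenvalues above and below $\sqrt\alpha$, but that is the spectral truncation in disguise. The cleanest fix is to replace the Tikhonov regularization with the truncated SVD at threshold $N_\epsilon^2 = \mu(R_\epsilon)^{2(1-\delta)}$, after which the remainder of your outline closes as in the paper.
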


\begin{proof} \textbf{Step 1.} Fix $\epsilon, \delta \in (0,1)$, and let $C>0$ be the constant from the well-posedness estimate. Since by assumption $\lim_{\rho\rightarrow \infty} c_M(\rho)|\log(\mu(\rho))|^{-\sigma_M(\rho)}=0$, it is possible to find $R_\epsilon>0$ so large that \begin{equation}\label{mu-condition-delta}
    c_M(R_\epsilon)|\log(\mu(R_\epsilon)^{1-\delta}+C\mu(R_\epsilon))|^{-\sigma_M(R_\epsilon)}<\epsilon\;.
\end{equation} From now on, we shall fix $W':= W_{R_\epsilon}$.
\vspace{3mm}

\noindent\textbf{Step 2.} Define the operator $A:= jr_\Omega P_\Psi : H^s_{\overline W'} \rightarrow L^2(\Omega)$, where $j: H^s(\Omega)\rightarrow L^2(\Omega)$ is the inclusion map, which is compact by Sobolev embedding. The well-posedness result (Proposition \ref{wellposed}) implies the boundedness of $r_\Omega P_\Psi$, so we have that $A$ is compact by composition.

Consider the Hilbert space adjoint $A^*$ of $A$, that is, the bounded (actually, compact) operator $A^* : L^2(\Omega)\rightarrow H^s_{\overline W'}$ defined by $\langle u,A^*v \rangle_{H^s_{\overline W'}}:=\langle Au,v\rangle_{L^2(\Omega)} $. Then $A^*A:H^s_{\overline W'}\rightarrow H^s_{\overline W'}$ is itself compact as composition of compact operators, and also positive and self-adjoint. The spectral theorem applies, and one can find $\{\lambda_j\}_{j\in\mathbb N}\subset \mathbb R^+$ with $\lambda_1 \geq \lambda_2 \geq ... \rightarrow 0 $ and an orthonormal basis $\{\phi_j\}_{j\in\mathbb N}$ of eigenfunctions of $H^s_{\overline W'}$ with $A^*A \phi_j = \lambda_j \phi_j$ for all $j\in\mathbb N$. If $w_j := \frac{A\phi_j}{\lambda_j^{1/2}}$, then $$\langle w_j, w_i \rangle_{L^2(\Omega)} = \frac{\langle A\phi_j, A\phi_i \rangle_{L^2(\Omega)}}{(\lambda_i\lambda_j)^{1/2}} = \frac{\langle \phi_j, A^*A\phi_i \rangle_{H^s_{\overline W'}}}{(\lambda_i\lambda_j)^{1/2}} = \frac{\lambda_i^{1/2}}{\lambda_j^{1/2}}\langle \phi_j, \phi_i \rangle_{H^s_{\overline W'}} = \delta_{ij}\;.$$ Moreover, if $u\in H^s_{\overline W'}$ and $h\in L^2(\Omega)$ is such that $\langle h,w_j\rangle_{L^2(\Omega)}=0$ for all $j\in\mathbb N$, then
$$\langle h, Au\rangle_{L^2(\Omega)} = \langle h, A\sum_{j\in\mathbb N} \langle u,\phi_j \rangle_{H^s_{\overline W'}} \phi_j \rangle_{L^2(\Omega)} = \sum_{j\in\mathbb N} \lambda_j^{1/2}\langle u,\phi_j \rangle_{H^s_{\overline W'}}\langle h, w_j \rangle_{L^2(\Omega)}=0\;.$$ Given that $A$ has dense range because of the Runge approximation property (Proposition \ref{Runge-quasilocal}), the last computation implies $h\equiv 0$ by Hahn-Banach. Therefore, $\{w_j\}_{j\in\mathbb N}$ is a complete orthonormal set in $L^2(\Omega)$.
\vspace{3mm}

\noindent \textbf{Step 3.} Let $v\in \widetilde H^s(\Omega)$. Define $N_\epsilon := \mu(R_\epsilon)^{1-\delta}$ and
$$f_\epsilon := \sum_{j:\lambda_j>N^2_\epsilon}\frac{\langle v, w_j \rangle_{L^2(\Omega)}}{ \lambda_j^{1/2}}\phi_j\;, \qquad  r_\epsilon := \sum_{j:\lambda_j\leq N^2_\epsilon} \langle v, w_j \rangle_{L^2(\Omega)} w_j\;.$$
Observe that $Af_\epsilon = \sum_{j:\lambda_j>N^2_\epsilon} \langle v, w_j \rangle_{L^2(\Omega)}w_j$. Also by orthonormality we get
$$ \|A^*r_\epsilon\|_{H^s_{\overline W'}} = \|\sum_{j:\lambda_j\leq N^2_\epsilon} \lambda_j^{1/2}\langle v, w_j \rangle_{L^2(\Omega)}\phi_j \|_{H^s_{\overline W'}} = (\sum_{j:\lambda_j\leq N^2_\epsilon} \lambda_j \langle v, w_j \rangle_{L^2(\Omega)}^2)^{1/2} \leq N_\epsilon \|r_\epsilon\|_{L^2(\Omega)}$$
and
\begin{align*}\| Af_\epsilon - v \|^2_{L^2(\Omega)} &= \| \sum_{j:\lambda_j\leq N^2_\epsilon} \langle v, w_j \rangle_{L^2(\Omega)}w_j \|^2_{L^2(\Omega)} = \sum_{j:\lambda_j\leq N^2_\epsilon} \langle v, w_j \rangle_{L^2(\Omega)}^2 \\ & = \langle v, \sum_{j:\lambda_j\leq N^2_\epsilon} \langle v, w_j \rangle_{L^2(\Omega)} w_j \rangle_{L^2(\Omega)} = \langle v, r_\epsilon \rangle_{L^2(\Omega)} \leq \|v\|_{H^s_{\overline \Omega}}\|r_\epsilon\|_{H^{-s}(\Omega)}\;.  \end{align*}
Finally, observe that $\|r_\epsilon\|_{L^2(\Omega)}^2= \sum_{j:\lambda_j\leq N^2_\epsilon} \langle v, w_j \rangle_{L^2(\Omega)}^2 = \| Af_\epsilon - v \|^2_{L^2(\Omega)} $.
\vspace{3mm}

\noindent \textbf{Step 4.} Let $u_\epsilon \in \widetilde H^s(\Omega)$ be the unique solution of the problem $$(-\Delta)^su_\epsilon+\Psi^* u_\epsilon = \frac{r_\epsilon}{\|r_{\epsilon}\|_{L^2(\Omega)}} \quad\mbox{ in } \Omega\;,\qquad \mbox{with } u_\epsilon|_{\Omega_e}=0\;.$$
By Theorem \ref{logestimate} we have $\|r_\epsilon\|_{H^{-s}(\Omega)}\leq c_M(R_\epsilon)\|r_\epsilon\|_{L^2(\Omega)}|\log (\|(-\Delta)^su_\epsilon\|_{H^{-s}(W')})|^{-\sigma_M(R_\epsilon)} $, since $\|\Psi^*\|=\|\Psi\|\leq M$. On the other hand, for all $g\in H^s_{\overline W'}$ it holds
\begin{align*}
    \langle A^*r_\epsilon, g \rangle_{H^s_{\overline W'}}  & = \langle r_\epsilon, Ag \rangle_{L^2(\Omega)} = \|r_\epsilon\|_{L^2(\Omega)}\langle (-\Delta)^su_\epsilon+\Psi^* u_\epsilon, Ag \rangle_{L^2(\Omega)} \\ & = \|r_\epsilon\|_{L^2(\Omega)}\langle (-\Delta)^su_\epsilon+\Psi^* u_\epsilon, P_\Psi g-g \rangle \\ & = \|r_\epsilon\|_{L^2(\Omega)}\langle u_\epsilon, ((-\Delta)^s+\Psi)P_\Psi g \rangle - \|r_\epsilon\|_{L^2(\Omega)}\langle (-\Delta)^su_\epsilon+\Psi^* u_\epsilon, g \rangle \\ & = - \|r_\epsilon\|_{L^2(\Omega)}\langle (-\Delta)^su_\epsilon+\Psi^* u_\epsilon, g \rangle\;,
\end{align*}
and thus $\|(-\Delta)^su_\epsilon + \Psi^* u_\epsilon\|_{H^{-s}(W')}\|r_\epsilon\|_{L^2(\Omega)} = \|A^*r_\epsilon\|_{H^s_{\overline W'}}$. We get
\begin{align*}
    \|r_\epsilon\|_{H^{-s}(\Omega)}\leq c_M(R_\epsilon)\|r_\epsilon\|_{L^2(\Omega)}\left|\log \left(\frac{\|A^*r_\epsilon\|_{H^s_{\overline W'}}}{\|r_\epsilon\|_{L^2(\Omega)}} + \|\Psi^* u_\epsilon\|_{H^{-s}(W')}\right)\right|^{-\sigma_M(R_\epsilon)}\;,
\end{align*} 
but since $\spt u_\epsilon \subset \Omega$ it holds that
$$ \|\Psi^* u_\epsilon\|_{H^{-s}(W')} \leq \|\Psi^* u_\epsilon\|_{H^{-s}(N(\spt(u_\epsilon),R_\epsilon)_e)} \leq \mu(R_\epsilon)\|u_\epsilon\|_{H^s}  \leq \frac{C\mu(R_\epsilon)\|r_\epsilon\|_{H^{-s}}}{\|r_\epsilon\|_{L^2(\Omega)}} \leq C\mu(R_\epsilon) $$
and so eventually 
\begin{align*}
    \|r_\epsilon\|_{H^{-s}(\Omega)}&\leq c_M(R_\epsilon)|\log (N_\epsilon + C\mu(R_\epsilon))|^{-\sigma_M(R_\epsilon)} \|r_\epsilon\|_{L^2(\Omega)} \\ & = c_M(R_\epsilon)|\log (\mu(R_\epsilon)^{1-\delta} + C\mu(R_\epsilon))|^{-\sigma_M(R_\epsilon)} \|r_\epsilon\|_{L^2(\Omega)} \leq \epsilon \|r_\epsilon\|_{L^2(\Omega)}\;.
\end{align*}

\noindent\textbf{Step 5.} Using the estimates in Steps 2 and 3, 
$$ \| Af_\epsilon - v \|_{L^2(\Omega)} = \frac{\| Af_\epsilon - v \|^2_{L^2(\Omega)}}{\|r_\epsilon\|_{L^2(\Omega)}} \leq \frac{\|v\|_{H^s_{\overline \Omega}}\|r_\epsilon\|_{H^{-s}(\Omega)}}{\|r_\epsilon\|_{L^2(\Omega)}} \leq \epsilon\|v\|_{H^s_{\overline \Omega}}\;. $$
This proves the first wanted inequality. For the second one, observe that
$$ \|f_\epsilon\|_{H^s_{\overline W'}}^2 = \|\sum_{j:\lambda_j>N^2_\epsilon}\frac{\langle v, w_j \rangle_{L^2(\Omega)}}{ \lambda_j^{1/2}}\phi_j\|_{H^s_{\overline W'}}^2 = \sum_{j:\lambda_j>N^2_\epsilon}\frac{\langle v, w_j \rangle^2_{L^2(\Omega)}}{ \lambda_j} \leq N_\epsilon^{-2} \|v\|_{L^2}^2\;, $$
and therefore $\|f_\epsilon\|_{H^s_{\overline W'}} \leq \mu(R_\epsilon)^{\delta-1}\|v\|_{L^2}$.
\end{proof}

\section{Proofs of the main theorems}\label{sec:proofs-main}
The proofs of our main theorems depend on the following Alessandrini identity:

\begin{Lem}\label{alex}
Let $\Omega\subset\mathbb R^n$ be a bounded open set. Let $s\in (0,1)$, and assume $\Psi_1, \Psi_2\in B_0(H^s,H^{-s})$. Then for all $f,g\in H^s(\mathbb R^n)$ it holds $$ \langle (\Lambda_{\Psi_1}- \Lambda_{\Psi_2})[f],[g] \rangle = \langle (\Psi_1-\Psi_2)P_{\Psi_1}f, P_{\Psi_2^*}g \rangle \;.$$
\end{Lem}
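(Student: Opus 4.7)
The plan is to follow the classical Alessandrini-type derivation, adapted to the bilinear form $B_\Psi$ of our setting. I start by setting $u_1:=P_{\Psi_1}f$ and $u_2^*:=P_{\Psi_2^*}g$; these are well defined by Proposition \ref{wellposed} together with Remark \ref{wellposedadjoint}, assuming implicitly that the eigenvalue condition \eqref{eigenvalues} is in force for $\Psi_1$, $\Psi_2$ and $\Psi_2^*$ so that all DN maps and Poisson solutions in the statement make sense. By construction $u_1-f,\,u_2^*-g\in\widetilde H^s(\Omega)$, and the variational form of the two Dirichlet problems reads
$$B_{\Psi_1}(u_1,v)=0=B_{\Psi_2^*}(u_2^*,v)\qquad\text{for every }v\in\widetilde H^s(\Omega).$$

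The next step is to rewrite each of the two DN pairings in a symmetric form involving $u_1$ and $u_2^*$. Since $g-u_2^*\in\widetilde H^s(\Omega)$, the equation for $u_1$ gives
$$\langle\Lambda_{\Psi_1}[f],[g]\rangle=B_{\Psi_1}(u_1,g)=B_{\Psi_1}(u_1,u_2^*)+B_{\Psi_1}(u_1,g-u_2^*)=B_{\Psi_1}(u_1,u_2^*).$$
For the second term I invoke the identity $(\Lambda_{\Psi_2})^*=\Lambda_{\Psi_2^*}$ from Proposition \ref{DNmaps} together with symmetry of the duality pairing, and then repeat the same cancellation with the roles of $f$ and $g$ swapped:
$$\langle\Lambda_{\Psi_2}[f],[g]\rangle=\langle\Lambda_{\Psi_2^*}[g],[f]\rangle=B_{\Psi_2^*}(u_2^*,f)=B_{\Psi_2^*}(u_2^*,u_1),$$
this time using that $f-u_1\in\widetilde H^s(\Omega)$ together with the equation for $u_2^*$.

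Finally I subtract. The fractional-Laplacian contribution $\langle(-\Delta)^{s/2}u_1,(-\Delta)^{s/2}u_2^*\rangle$ appears in both $B_{\Psi_1}(u_1,u_2^*)$ and $B_{\Psi_2^*}(u_2^*,u_1)$ and cancels by symmetry of the $L^2$ inner product, leaving
$$\langle(\Lambda_{\Psi_1}-\Lambda_{\Psi_2})[f],[g]\rangle=\langle\Psi_1 u_1,u_2^*\rangle-\langle\Psi_2^* u_2^*,u_1\rangle.$$
The defining property of the adjoint recorded inside the proof of Lemma \ref{quasiprops}, combined once more with symmetry of the duality pairing, converts $\langle\Psi_2^* u_2^*,u_1\rangle=\langle u_2^*,\Psi_2 u_1\rangle=\langle\Psi_2 u_1,u_2^*\rangle$, and reinserting $u_1=P_{\Psi_1}f$ and $u_2^*=P_{\Psi_2^*}g$ yields the claimed identity. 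I foresee no genuine obstacle: the argument is a purely algebraic rearrangement whose substantive inputs (well-posedness of the two Poisson problems, the adjoint formula $(\Lambda_{\Psi_2})^*=\Lambda_{\Psi_2^*}$, and the defining property of $\Psi_2^*$) are all already established above.
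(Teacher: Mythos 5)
Your proof is correct and follows essentially the same route as the paper's: replace representatives of $[f],[g]$ by the Poisson solutions, invoke $(\Lambda_{\Psi_2})^* = \Lambda_{\Psi_2^*}$ from Proposition \ref{DNmaps}, expand the bilinear forms so the $(-\Delta)^{s/2}$ terms cancel by symmetry, and pass the adjoint to obtain $\langle(\Psi_1-\Psi_2)P_{\Psi_1}f,P_{\Psi_2^*}g\rangle$. The only cosmetic difference is that you make explicit the cancellation $B_{\Psi_1}(u_1,g-u_2^*)=0$, whereas the paper absorbs this into the well-definedness of the DN map established in Proposition \ref{DNmaps}.
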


\begin{proof}
First of all, we observe that for $\Psi_1, \Psi_2\in B_0(H^s,H^{-s})$ both the Poisson operators and the DN maps are well-defined. Using the definitions and the property $(\Lambda_{\Psi_2})^* = \Lambda_{\Psi_2^*}$ from Proposition \ref{DNmaps}, we see that for all $f,g\in H^s(\mathbb R^n)$
\begin{align*}
    \langle (\Lambda_{\Psi_1}- \Lambda_{\Psi_2})[f],[g] \rangle & = \langle \Lambda_{\Psi_1}[f], [P_{\Psi_2^*}g] \rangle - \langle \Lambda_{\Psi_2}[P_{\Psi_1}f], [g] \rangle \\ & = \langle \Lambda_{\Psi_1}[f], [P_{\Psi_2^*}g] \rangle - \langle \Lambda_{\Psi_2^*}[g], [P_{\Psi_1}f]\rangle \\ & = B_{\Psi_1}(P_{\Psi_1}f, P_{\Psi_2^*}g) - B_{\Psi_2^*}(P_{\Psi_2^*}g, P_{\Psi_1}f) \\ & = \langle \Psi_1 P_{\Psi_1}f, P_{\Psi_2^*}g \rangle - \langle \Psi_2^* P_{\Psi_2^*}g , P_{\Psi_1}f  \rangle = \langle (\Psi_1-\Psi_2)P_{\Psi_1}f, P_{\Psi_2^*}g \rangle\;.
\end{align*}
\end{proof}

We are now ready to prove the main theorems.

\begin{proof}[Proof of Theorem \ref{main-finite}]
Let $p:= \max\{p(\Psi_1), p(\Psi_2)\}$, and assume the distances between $\Omega, W_1$ and $W_2$ are all at least $p$. In particular this implies $W_i\cap N(\Omega, p(\Psi_j))_e \neq \emptyset$ for all $i,j\in\{1,2\}$. Let $v_1,v_2\in \widetilde H^s(\Omega)$. Given the geometric conditions and Proposition \ref{Runge-finite-propagation}, for all $k\in\mathbb N$ it is possible to find $f_{j,k} \in C^\infty_c(W_j)$, $j=1,2$, such that
$$ P_{\Psi_1}f_{1,k} = f_{1,k}+v_1 +r_{1,k}\;, \qquad P_{\Psi_2^*}f_{2,k} = f_{2,k} + v_2  +r_{2,k} $$
with $r_{j,k} \rightarrow 0$ in $\widetilde H^s(\Omega)$ as $k\rightarrow \infty$. The assumption on the DN maps and the Alessandrini identity now give
\begin{align*}
    0 & = \langle (\Lambda_{\Psi_1}- \Lambda_{\Psi_2})[f_{1,k}],[f_{2,k}] \rangle = \langle (\Psi_1-\Psi_2)P_{\Psi_1}f_{1,k}, P_{\Psi_2^*}f_{2,k} \rangle \\ & = \langle (\Psi_1-\Psi_2)(f_{1,k}+v_1 +r_{1,k}), f_{2,k} + v_2  +r_{2,k} \rangle\;.
\end{align*}
Observe that $\spt(\Psi_i f_{j,k})\subset N( W_j,p )$ for all $i,j\in\{1,2\}$, and thus the equality above is reduced to
$$ \langle (\Psi_1-\Psi_2)(v_1 +r_{1,k}), v_2  +r_{2,k} \rangle =0\;. $$
Moreover,
$$ |\langle \Psi_i r_{j,k}, v \rangle| \leq \|\Psi_i\| \|r_{j,k}\|_{H^s} \|v\|_{H^s} \rightarrow 0 $$
for all $i,j\in\{1,2\}$ as $k\rightarrow \infty$. We have obtained $\langle (\Psi_1-\Psi_2)v_1 , v_2 \rangle =0$, which by the arbitrariety of $v_1, v_2 \in \widetilde H^s(\Omega)$ gives $\Psi_1 = \Psi_2$ as operators in $B(\widetilde H^s(\Omega),H^{-s}(\Omega))$.
\end{proof}

The proof of Theorem \ref{main-quasi} is similar to the one of the previous Theorem \ref{main-finite}, but we need the quantitative Runge approximation property from Theorem \ref{Qrunge} instead than the abstract result. This is due to the fact that now in general we do not have enough information on the supports.

\begin{proof}[Proof of Theorem \ref{main-quasi}] \textbf{Step 1.} Let $j=1,2$, and fix $\delta \in (1/2,1)$. Then for all $k\in\mathbb N$ it is possible to find a real number $R_{j,k}>0$ so large that condition \eqref{mu-condition-delta} holds for $\epsilon = k^{-1}$. If $R_k := \max_{j=1,2} R_{j,k}$, then of course both conditions hold at the same time.
\vspace{3mm}

\noindent \textbf{Step 2.} Let $v_1,v_2\in \widetilde H^s(\Omega)$ be such that $\|v_1\|_{H^s_{\overline \Omega}}=\|v_2\|_{H^s_{\overline \Omega}}=1$. Following the construction of Theorem \ref{Qrunge}, for all $k\in\mathbb N$ it is possible to find a function $f_{1,k} \in H^s_{\overline W_{1,R_{k}}}$ such that
$$ \|P_{\Psi_1} f_{1,k} - v_1\|_{L^2(\Omega)}\leq k^{-1}  \qquad \mbox{and} \qquad \|f_{1,k}\|_{H^s_{\overline W_{1,R_{k}}}}\leq \mu(R_{k})^{\delta-1}\;. $$
By density it is then possible to find $f_{1,k}'\in C^\infty_c(W_{1,R_{k}})$ such that $P_{\Psi_1} f'_{1,k} = f'_{1,k}+v_1+r_{1,k}$, with
$$ \|r_{1,k}\|_{L^2(\Omega)}= \|P_{\Psi_1} f'_{1,k} - v_1\|_{L^2(\Omega)}\leq c_1k^{-1}  \qquad \mbox{and} \qquad \|f'_{1,k}\|_{H^s_{\overline W_{1,R_{k}}}}\leq 2\mu(R_{k})^{\delta-1}\;, $$
where the constant $c_1>0$ depends only on $\Omega, \Psi_1$ and comes from the well-posedness estimate. We can do a similar construction for $v_2$ as well, which produces $f_{2,k}'\in C^\infty_c(W_{2,R_{k}})$ such that $P_{\Psi^*_2} f'_{2,k} = f'_{2,k}+v_2+r_{2,k}$, with 
$$ \|r_{2,k}\|_{L^2(\Omega)}=\|P_{\Psi_2^*} f'_{2,k} - v_2\|_{L^2(\Omega)}\leq c_2k^{-1}  \qquad \mbox{and} \qquad \|f'_{2,k}\|_{H^s_{\overline W_{2,R_{k}}}}\leq 2\mu(R_{k})^{\delta-1}\;. $$ Observe that by construction dist$(\Omega, \spt(f'_{j,k})) \geq R_{k}$, and therefore dist$(\spt(f'_{1,k}),\spt(f'_{2,k}))\geq R_{k}$ by the geometric assumption.
\vspace{3mm}

\noindent\textbf{Step 3.} The assumption on the DN maps and the Alessandrini identity again give
\begin{align*}
    \langle (\Psi_1-\Psi_2)(f'_{1,k}+v_1+r_{1,k}), f'_{2,k}+v_2+r_{2,k} \rangle =0\;.
\end{align*}

However, we can compute
\begin{align*} |\langle \Psi_j f'_{1,k}&, f'_{2,k} + v_2+r_{2,k}\rangle|  \leq \|\Psi_j f'_{1,k}\|_{L^2(\spt(f'_{2,k})\cup \Omega)}\| f'_{2,k}+v_2+r_{2,k} \|_{L^2} \\ & \leq \|\Psi_j f'_{1,k}\|_{L^2(N(\spt(f'_{1,k}),R_{k})_e)}\, (\| f'_{2,k}\|_{L^2(W_{2,R_{k}})}+ \| v_2 \|_{L^2(\Omega)}+ \| r_{2,k} \|_{L^2(\Omega)}) \\ & \leq \mu(R_{k}) \|f'_{1,k}\|_{L^2} \,( 2\mu(R_{k})^{\delta-1}+1+c_2k^{-1} ) \\ & \leq 2\mu(R_{k})^\delta  \,( 2\mu(R_{k})^{\delta-1}+1+c_2k^{-1} )\;,
\end{align*}
\noindent which vanishes as $k\rightarrow \infty$ since $\delta > 1/2$. Thus we are left with 
$$ \langle (\Psi_1-\Psi_2)(v_1+r_{1,k}), v_2+r_{2,k}\rangle=0\;, $$
and we estimate
\begin{align*}
    |\langle \Psi_jr_{1,k}, v_2+r_{2,k} \rangle| & \leq \| \Psi_jr_{1,k}\|_{L^2}\| v_2+r_{2,k}\|_{L^2} \\ & \leq \|\Psi_j\|\|r_{1,k}\|_{L^2(\Omega)}\, ( \| v_2\|_{L^2(\Omega)}+\|r_{2,k}\|_{L^2(\Omega)} ) \\ & \leq c_1 k^{-1}\|\Psi_j\|\, ( 1+c_2k^{-1} )\;,
\end{align*}
which also vanishes as $k\rightarrow \infty$. Eventually $ \langle (\Psi_1-\Psi_2)v_1, v_2\rangle=0 $ for all $v_1, v_2 \in \widetilde H^s(\Omega)$ of unitary norm. By rescaling, this implies that $\Psi_1=\Psi_2$ as operators in $B(\widetilde H^s(\Omega),H^{-s}(\Omega))$. 
\end{proof}

\begin{Rem}\label{geom-remark}
We remark at this point that we do not believe the geometric condition 
\begin{equation}\label{geom-cond-quasilocal}
    |x_1 - x_2| \geq \max_{j=1,2}\{ \mbox{dist}(x_j,\Omega) \}\qquad \mbox{ for all  } x_1\in W_1, x_2\in W_2
\end{equation} to be essential to the theorem. We have however decided to keep it, as it simplifies the proof to a certain extent and it is not the main focus of the discussion. It can be proved by simple geometric considerations that \eqref{geom-cond-quasilocal} is verified e.g. whenever $W_2\subseteq C^O(W_1)$, where the polar cone of $W_1$ is defined as
$$ C^O(W_1):= \{x\in \mathbb R^n : x\cdot w \leq 0, \forall w\in W_1\}\;, $$
and the origin $O$ is assumed to belong to $\Omega$. Observe that this condition is symmetric, that is, $W_2\subseteq C^O(W_1)$ if and only if $W_1\subseteq C^O(W_2)$. This in particular means that $W_1, W_2$ belonging to opposite orthants is an admissible choice (check Figure \ref{fig:figure1}). Since the sets upon which the DN map is measured can always be reduced, it is also clear that the stated geometric conditions need only be satisfied by aptly chosen \emph{subsets} of $W_1, W_2$. 
\end{Rem}

\begin{figure}
\centering
  \captionsetup{width=.8\linewidth}
  \includegraphics[width=\linewidth/2]{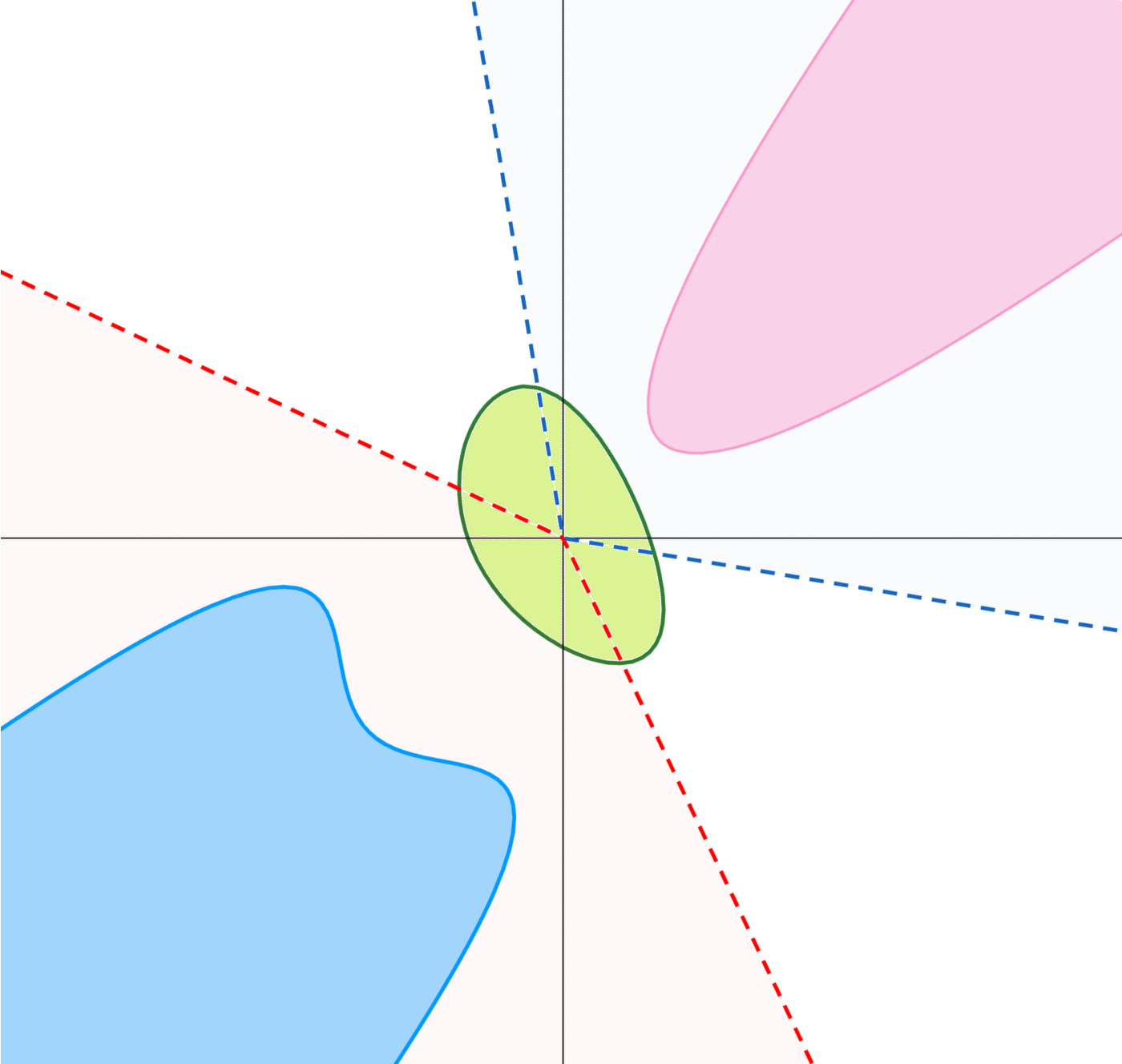}
  \caption{The geometric situation described in Remark \ref{geom-remark}. In green, pink and blue we respectively represent the sets $\Omega$, $W_1$ and $W_2$. The polar cones $C^O(W_1)$ and $C^O(W_2)$ are in pink and blue shades. Observe that $W_2\subseteq C^O(W_1)$ and $W_1\subseteq C^O(W_2)$. Therefore, condition \eqref{geom-cond-quasilocal} holds.}
  \label{fig:figure1}
\end{figure}

A linear operator between normed spaces is bounded if and only if it is continuous. Thus each element $\Psi \in B(\widetilde H^s(\Omega),H^{-s}(\Omega))$ can be interpreted as a continuous linear operator between $C^\infty_c(\Omega)$ and $\mathcal D'(\Omega)$, the set of all distributions supported on $\Omega$. By the Schwartz kernel theorem \cite{HO:analysis-of-pdos}, there exists one and only one distribution $\psi\in \mathcal D'(\Omega\times\Omega)$ such that 
$$  \langle \Psi u,v \rangle = \psi(u\otimes v)$$
holds for all $u,v\in C^\infty_c(\Omega)$. Such $\psi$ is called the kernel of $\Psi$, and if it is more regular one has the familiar integral formula
$$  \langle \Psi u,v \rangle = \int_{\mathbb R^n}\int_{\mathbb R^n} \psi(x,y)u(x)v(y)\, dxdy\;.$$
Thus we obtain the following Corollary to Theorems \ref{main-finite}, \ref{th:quasilocal}:
\begin{Cor}
Assume the conditions of either Theorem \ref{main-finite} or Theorem \ref{th:quasilocal} are satisfied. Then the kernels $\psi_1, \psi_2$ of the perturbations $\Psi_1, \Psi_2$ coincide as elements of $\mathcal D'(\Omega\times\Omega)$. If $\psi_1, \psi_2$ happen to be functions, then they coincide on $\Omega\times\Omega$.
\end{Cor}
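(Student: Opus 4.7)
The plan is straightforward: the conclusions of Theorems \ref{main-finite} and \ref{main-quasi} already give $\Psi_1=\Psi_2$ as elements of $B(\widetilde H^s(\Omega),H^{-s}(\Omega))$, so the corollary is essentially a translation of this operator identity into the language of Schwartz kernels. First I would observe that $C^\infty_c(\Omega)\hookrightarrow \widetilde H^s(\Omega)$ continuously and $H^{-s}(\Omega)\hookrightarrow \mathcal D'(\Omega)$, so that composing with $\Psi_j$ produces continuous linear maps $C^\infty_c(\Omega)\to\mathcal D'(\Omega)$ which, by the main theorems, agree. In particular the bilinear pairing $(u,v)\mapsto \langle \Psi_j u,v\rangle$ on $C^\infty_c(\Omega)\times C^\infty_c(\Omega)$ is independent of $j\in\{1,2\}$.

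Next I would invoke the Schwartz kernel theorem, already cited just above the corollary: it yields unique distributions $\psi_1,\psi_2\in\mathcal D'(\Omega\times\Omega)$ with $\psi_j(u\otimes v)=\langle \Psi_j u,v\rangle$ for all $u,v\in C^\infty_c(\Omega)$. The identity of the pairings then gives $\psi_1(u\otimes v)=\psi_2(u\otimes v)$ on every simple tensor. Since the linear span of $\{u\otimes v:u,v\in C^\infty_c(\Omega)\}$ is sequentially dense in $C^\infty_c(\Omega\times\Omega)$ in its standard LF topology (a classical fact obtained via partitions of unity and mollification), continuity of the distribution $\psi_1-\psi_2$ forces $\psi_1=\psi_2$ in $\mathcal D'(\Omega\times\Omega)$. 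Alternatively, this step can be shortened by directly appealing to the uniqueness clause of the Schwartz kernel theorem, which says that the kernel is determined by its action on tensor products.

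For the second assertion, if $\psi_1,\psi_2$ are locally integrable functions on $\Omega\times\Omega$, then the distributional identity $\psi_1=\psi_2$ reads
\[
\int_{\Omega\times\Omega}(\psi_1(x,y)-\psi_2(x,y))\,\varphi(x,y)\,dxdy=0 \quad \text{for all }\varphi\in C^\infty_c(\Omega\times\Omega),
\]
and the du Bois-Reymond lemma then gives $\psi_1=\psi_2$ almost everywhere on $\Omega\times\Omega$, which is exactly the pointwise conclusion required. I do not expect a serious obstacle at any step: the whole argument is a bookkeeping exercise once the operator equality furnished by the main theorems is in hand, and the only marginally delicate point, density of simple tensors in $C^\infty_c(\Omega\times\Omega)$, can in any case be avoided by citing the uniqueness part of the kernel theorem directly.
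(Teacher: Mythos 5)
Your argument is correct and follows precisely the route the paper intends: the paper provides no separate proof of this corollary, but the paragraph preceding it sets up exactly the Schwartz-kernel-theorem machinery you invoke, and your application of its uniqueness clause (plus du Bois-Reymond for the function case) is the straightforward completion. You also correctly read the corollary's reference to Theorem \ref{th:quasilocal} as an apparent misprint for Theorem \ref{main-quasi}, since only the latter (together with Theorem \ref{main-finite}) yields the operator identity $\Psi_1=\Psi_2$ in $B(\widetilde H^s(\Omega),H^{-s}(\Omega))$ on which the whole argument rests.
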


\section{Examples}\label{sec:examples}

In this section we discuss a number of examples, in order to ensure that the assumptions of our theorems are verified by a large family of non-trivial perturbations.
\vspace{5mm}

\noindent \textbf{Example 1: Finite propagation operators $\Psi$ of given $p(\Psi)$.} Given any $R>0$, one can always construct operators $\Psi: L^2(\mathbb R^n)\rightarrow L^2(\mathbb R^n)$ such that $p(\Psi)=R$. In order to see this, fix e.g. any $\psi\in L^2(\mathbb R^n \times \mathbb R^n)$ such that $\spt\psi=\{(x,y)\in\mathbb R^{2n}:|x-y|\leq R\}$ with $\psi\geq 0$, and define the Hilbert-Schmidt operator $\Psi$ of kernel $\psi$ by
$$ \Psi u (x) := \int_{\mathbb R^n} \psi(x,y)u(y)dy\qquad \mbox{ for } u\in L^2(\mathbb R^n)\;. $$
Then famously $\|\Psi\|_{B(L^2,L^2)}\leq \|\psi\|_{L^2(\mathbb R^n \times \mathbb R^n)}<\infty$, so $\Psi\in B(L^2(\mathbb R^n), L^2(\mathbb R^n))$ (check e.g. \cite{HS78}). Moreover, if $x\in N(\spt u, R)_e$ and $y\in \spt u$ we have $|x-y|>R$, and so $\psi(x,y)=0$. Therefore $\Psi u(x) = \int_{\spt u} \psi(x,y)u(y)dy =0$, which implies $\spt (\Psi u)\subseteq N(\spt u, R)$. This proves that $\Psi$ has finite propagation as an operator in $B(L^2(\mathbb R^n),L^2(\mathbb R^n))$, and $p(\Psi)\leq R$. However, since $\Psi \chi_{B_1}(x) > 0$ for all $x\in B_{R+1}$, we must have $p(\Psi)=R$. 
\vspace{2mm}

This example shows in particular that the constructed operator $\Psi$ is nonlocal. It now follows from Lemma \ref{quasiprops} and approximation by smooth functions that, when interpreted as an operator in $B(H^s(\mathbb R^n),H^{-s}(\mathbb R^n))$, $\Psi$ has again finite propagation $R$.
\vspace{5mm}

\noindent \textbf{Example 2: Quasilocal operators $\Psi$ of given decay function $\mu$.} Given any function $\mu: \mathbb R^+\rightarrow \mathbb R^+$ such that $\lim_{r\rightarrow \infty}\mu(r)=0$, one can always construct a quasilocal operator $\Psi: L^2\rightarrow L^2$ admitting $\mu$ as a decay function. Of course, in order to avoid trivialities, we will construct $\Psi$ not of finite propagation. 
\vspace{2mm}

In order to see this, first of all observe that for a $\Psi\in B(L^2,L^2)$ generated by a kernel $\psi\in L^2(\mathbb R^n \times \mathbb R^n)$ we have 
\begin{align*}
    \|\Psi u\|^2_{L^2(N(\spt u,r)_e)} & = \int_{N(\spt u,r)_e}\left| \int_{\mathbb R^n} \psi(x,y)u(y)dy \right|^2dx \\ & \leq \int_{N(\spt u,r)_e} \left(\int_{\spt u} |\psi(x,y)u(y)|dy \right)^2dx \\ & \leq \|u\|^2_{L^2(\mathbb R^n)} \|\psi\|^2_{L^2(N(\spt u,r)_e\times \spt u)}\;.
\end{align*}
Since for all open sets $U\subset\mathbb R^n$ we have $N(U,r)_e\times U \subset \{ (x,y)\in\mathbb R^{2n}: y\in \mathbb R^n \setminus B_r(x) \}$, it suffices to take $\psi\in L^2(\mathbb R^n \times \mathbb R^n)$ such that $\psi>0$ and $ \int_{\mathbb R^n}\int_{\mathbb R^n\setminus B_r(x)} \psi^2(x,y) dydx \leq \mu^2(r)$ for all $r>0$. Assume that $\psi(x,y)=\psi_1(x)\psi_2(y-x)$ for some functions $\psi_1, \psi_2 : \mathbb R^n \rightarrow \mathbb R$, with $\|\psi_1\|_{L^2(\mathbb R^n)}=1$ and $\psi_2(z) = \widetilde \psi_2(|z|)$ radial. Then by a change of variable
\begin{align*}
    \int_{\mathbb R^n}\int_{\mathbb R^n\setminus B_r(x)}\psi^2(x,y)dydx & = \int_{\mathbb R^n}\psi^2_1(x)\int_{\mathbb R^n\setminus B_r(x)}\psi^2_2(y-x)dydx \\ & =  \|\psi_1\|^2_{L^2(\mathbb R^n)}\int_{\mathbb R^n\setminus B_r}\psi^2_2(z)dz = c_n \int_r^\infty\rho^{n-1} \widetilde \psi_2^2(\rho)d\rho\;.
\end{align*}
Let $\widetilde \mu\in C^\infty(\mathbb R^+)$ be strictly decreasing and such that $0<\widetilde \mu(r)\leq \mu(r)$ for all $r\in \mathbb R^+$. It suffices to find $\widetilde\psi_2$ such that
$$ c_n \int_r^\infty \rho^{n-1}\widetilde\psi_2^2(\rho)d\rho = \widetilde \mu^2(r) = -\int_r^\infty (\widetilde \mu^2)' (\rho)d\rho\;, $$
and we can take $\widetilde\psi_2(\rho) = \left(-\frac{ (\widetilde \mu^2)'(\rho)}{c_n \rho^{n-1}}\right)^{1/2}$. One also sees that $$\|\Psi\|\leq \|\psi\|_{L^2(\mathbb R^n \times \mathbb R^n)}= \|\psi_1\|_{L^2(\mathbb R^n)}\|\psi_2\|_{L^2(\mathbb R^n)}=\widetilde \mu(0)\;,$$
and thus the operator $\Psi$ can be taken of arbitrarily small norm.
\vspace{3mm}

If $\psi_1, \psi_2$ are strictly positive on $\mathbb R^n$, then $\spt (\Psi\chi_{B_1})=\mathbb R^n$, and thus the quasilocal operator $\Psi$ constructed in this example is not of finite propagation. Finally, using again Lemma \ref{quasiprops} one shows that $\Psi$ is also quasilocal with the same decay function when interpreted as an operator in $B(H^s,H^{-s})$.
\vspace{5mm}

\noindent \textbf{Example 3: Perturbations verifying the assumptions of Theorem \ref{main-quasi} } Given any $M>0$ and any couple of sets $\Omega, W$ as in Theorem \ref{main-quasi}, it is always possible to find quasilocal perturbations verifying $\|\Psi\|\leq M$ and condition \eqref{mu-condition} with respect to $M$. To see this, first observe that the fixed choice of $\Omega, W, M$ corresponds to determined functions $c,\sigma$. Given any $f:\mathbb R^+\rightarrow \mathbb R^+_0$ such that $\lim_{r\rightarrow \infty}f(r)=\infty$, the function $$\mu(r):= e^{-(c(r)f(r))^{1/\sigma(r)}}$$ verifies both \eqref{mu-condition} and $\lim_{r\rightarrow \infty}\mu(r)=0$. Using Example 2, one can then construct infinitely many different perturbations $\Psi$ admitting such $\mu$ as a decay function. As observed in the previous example, these can be arranged to have operator norm smaller than $M$.
\vspace{5mm}

\noindent \textbf{Example 4: Quasilocal perturbations as pseudodifferential operators } In this last example, we present some special cases in which our quasilocal perturbations turn out to be pseudodifferential operators. In this respect, we follow \cite{HO:analysis-of-pdos3}.

Given any $m\in\mathbb R$, the set $S^m$ of all $a\in C^\infty(\mathbb R^n \times \mathbb R^n)$ such that for all multi-indices $\alpha,\beta$ there is a positive constant $C_{\alpha,\beta}$ such that the estimate
$$ |\partial^\alpha_\xi\partial^\beta_x a(x,\xi)| \leq C_{\alpha,\beta}(1+|\xi|)^{m-|\alpha|} $$
holds for all $x,\xi \in \mathbb R^n$ is called the space of symbols of order $m$. If $a\in S^m$ is a symbol of order $m$ and $u\in \mathscr S$, then the operator defined by $$a(x,D)u(x):= \int_{\mathbb R^n}e^{ix\cdot\xi}a(x,\xi)\hat u(\xi)d\xi,$$
where $\hat u\in \mathscr S$ is the Fourier transform of $u$, is a pseudodifferential operator ($\Psi$DO) of order $m$. Such operator can alternatively be written as $$a(x,D)u(x):= \int_{\mathbb R^n} K(x,y)u(y)dy,$$
where the Schwartz kernel $K$ of the operator and its symbol $a$ are related in the following way:
$$ K(x,y)= \int_{\mathbb R^n}e^{i(x-y)\cdot\xi}a(x,\xi)d\xi, \qquad a(x,\xi)=\int_{\mathbb R^n}e^{-iy\cdot\xi}K(x,x-y)dy. $$

Assume now that $K(x,y):= k(x-y)$ for some function $k\in C^\infty_c(B_R)$ and $R>0$. In this case $\Psi$ is given by convolution $$\Psi u(x) = \int_{\mathbb R^n} k(x-y)u(y)dy = (k\ast u)(x),$$
and its symbol $a$ is
\begin{align*}
    a(x,\xi) = \int_{\mathbb R^n} e^{-iy\cdot \xi}k(y)dy = \mathcal F^{-1}k(\xi).
\end{align*}
Observe that $a$ does not depend on $x$, and moreover it is a Schwartz function by the regularity of $k$. Thus in particular $a\in S^0$, and $\Psi$ is a $\Psi$DO of order $0$. Using our previous Example 1, it is also easy to observe that $\Psi$ has finite propagation $R$. With this, we have proved that among all finite propagation perturbations there are in particular the convolutions against smooth, compactly supported functions, and these are pseudodifferential operators of order $0$.
\vspace{2mm}

It is also interesting to assume that $K(x,y):= k_1(x)k_2(x-y)$, where $k_1,k_2$ are Schwartz functions. Now $\Psi$ can be written as a convolution with a variable coefficient
$$\Psi u(x) = \int_{\mathbb R^n}k_1(x)k_2(x-y)u(y)dy = k_1(x)\left( k_2\ast u \right)(x),$$
and the symbol $a$ is a separable function of $x$ and $\xi$:
$$ a(x,\xi) = \int_{\mathbb R^n} e^{-iy\cdot\xi}k_1(x)k_2(y)dy = k_1(x)\mathcal F^{-1}k_2(\xi). $$
Since $k_1, \mathcal F^{-1}k_2$ are Schwartz functions and $\partial^\alpha_\xi\partial^\beta_x a(x,\xi) = (\partial^\beta_x k_1)(\partial^\alpha_\xi \mathcal F^{-1}k_2)$, the symbol $a(x,\xi)$ belongs to the class $S^0$. For any fixed $r>0$ we can compute
\begin{align*}
    \int_{\mathbb R^n}\int_{\mathbb R^n\setminus B_r(x)} K^2(x,y)dydx = \int_{\mathbb R^n}k_1^2(x)\int_{\mathbb R^n\setminus B_r(x)} k_2^2(x-y)dydx = \|k_1\|_{L^2(\mathbb R^n)}\|k_2\|_{L^2(\mathbb R^n\setminus B_r)}.
\end{align*}
Given that $k_1, k_2$ are Schwartz, the above quantity is finite for all $r>0$, and it vanishes as $r\rightarrow \infty$. According to our previous Example 2, $\Psi$ is then quasilocal, and it admits decay function $\mu(r) = (\|k_1\|_{L^2(\mathbb R^n)}\|k_2\|_{L^2(\mathbb R^n\setminus B_r)})^{1/2}$. With this, we have proved that among all quasilocal perturbations there are in particular the products of a Schwartz function and a convolution against a Schwartz function, and these are pseudodifferential operators of order $0$.

\section{Appendix}\label{sec:appendix}
In this Appendix we briefly (but very carefully) sketch the main ideas of the proof of Theorem 1.3 from \cite{RS17}. In doing so, we show how the constants $c,\sigma$ in our Theorem \ref{logestimate} depend on the set $W$ and parameter $M$.
\vspace{3mm}

Let $v\in L^2(\Omega)$ be such that $\|v\|_{L^2(\Omega)}=1$, and assume that $w,\widetilde w$ respectively solve the following problems:
\begin{equation*}
\begin{split}
    (-\Delta)^s w+\Psi w & =v \mbox{ in } \Omega \\
    w & =0 \mbox{ in } \Omega_e
\end{split}\;, \qquad \qquad
\begin{split}
    \nabla\cdot(x_{n+1}^{1-2s}\nabla \widetilde w) & =0 \mbox{ in } \mathbb R^{n+1}_+ \\
    \widetilde w(\cdot,0) & = w(\cdot) \mbox{ in } \mathbb R^n
\end{split}\;.
\end{equation*}

The function $\widetilde w$ is called the Caffarelli-Silvestre extension of $w$ (see \cite{CS14, CS07}), and it has the remarkable property that
$$ \lim_{x_{n+1}\rightarrow 0} x_{n+1}^{1-2s}\widetilde w(\cdot,x_{n+1}) = -a_s(-\Delta)^sw \qquad \mbox{ in } H^{-s}$$
for some constant $a_s>0$. Thanks to Lemma 4.2 in \cite{RS17} and our Proposition \ref{wellposed}, it holds that
\begin{equation}\label{cond1}
    \|x_{n+1}^{1/2-s}\widetilde w\|_{L^2(\mathbb R^n \times [0,2])} + \|x_{n+1}^{1/2-s}\nabla\widetilde w\|_{L^2(\mathbb R^{n+1}_+)} \leq c_{\Omega} \|w\|_{H^{s}(\mathbb R^n)} \leq c_{\Omega,M} \|v\|_{L^2(\Omega)} = c_{\Omega,M}=:E
\end{equation}
and
\begin{equation*}
    \|x_{n+1}^{1/2-s}\widetilde w\|_{L^2(W\times[0,1])} \leq \|(-\Delta)^s w\|_{H^{-s}(W)} =: \eta\;.
\end{equation*}
We observe that by taking $E$ large enough, one can have $\eta/E$ as small as desired (this will be useful later on). Moreover, by Vishik-Eskin estimates (\cite{Gr15}) and Lemmas 4.4, 6.2 in \cite{RS17} it holds that for $0\leq \gamma \leq \min\{s,1/2\}$ 
\begin{equation}\label{cond2}
    \|x_{n+1}^{1/2-s-\gamma}\nabla\widetilde w\|_{L^2(\mathbb R^{n+1}_+)} \leq c_{\Omega} \|w\|_{H^{s+\gamma}(\mathbb R^n)} \leq c'_{\Omega,M} \|v\|_{H^{\gamma-s}(\Omega)} \leq c'_{\Omega,M} \|v\|_{L^2(\Omega)} = c'_{\Omega,M}\;.
\end{equation}

Under conditions \eqref{cond1}, \eqref{cond2} it is known from Propositions 5.3, 5.4 and Theorem 5.5 in \cite{RS17} that there exist $K>1$, $\alpha \in (0,1)$ depending only on $n,s$ such that 
\begin{equation}\label{bound-bulk} \|x_{n+1}^{1/2-s}\widetilde w\|_{L^2(B_{2r}^+(x_0))} \leq KE^{1-\alpha}\|x_{n+1}^{1/2-s}\widetilde w\|_{L^2(B_{r}^+(x_0))}^\alpha \;, \end{equation}
where either $B'_{4r}(x_0)\subseteq W\times\{0\}$ with $x_0 \in W\times\{0\}$, or the $x_{n+1}$-component of $x_0$ is $5r$. Here we use the definitions $B':= B\cap (\mathbb R^n\times \{0\})$ and $B^+:= B\cap \mathbb R^{n+1}_+$, where $B$ is a $(n+1)$-dimensional ball. 
\vspace{3mm}

Since $\Omega$ is bounded, we certainly have $\Omega\subset B_{r_\Omega}(x_\Omega)$ for some point $x_\Omega\in\mathbb R^n$ and some $r_\Omega>0$. We also assume for the time being that $W$ is a ball of radius $r_W$ and center $x_W$, and assume $r_W \leq 2$. Then $B'_{r_W}(x_W) \subseteq W\times \{0\}$, and also $B_{r_W}^+(x_W)\subset \mathbb R^n\times[0,2]$. This implies that we can use estimate \eqref{bound-bulk} and write
\begin{align*}
    \|x_{n+1}^{1/2-s}\widetilde w\|_{L^2(B_{r_W/2}^+(x_W))} &\leq KE^{1-\alpha}\|x_{n+1}^{1/2-s}\widetilde w\|_{L^2(B_{r_W/4}^+(x_W))}^\alpha \\ & \leq KE^{1-\alpha}\|x_{n+1}^{1/2-s}\widetilde w\|_{L^2(W\times [0,1])}^\alpha \leq KE (\eta/E)^\alpha\;. 
\end{align*}

In order to be allowed to use \eqref{bound-bulk} for a center not belonging to $\mathbb R^n$, we need balls of radii depending on the $x_{n+1}$ component (or "height") of the center. We thus define $r(y):=y/5$ for all $y\in(0,2)$ to be the radius allowed for a ball with center at height $y$.

We now compute the largest height $y_1$ such that $B^+_{r(y_1)}(x_W,y_1) \subset B^+_{r_W/2}(x_W,0)$. This gives the relation $r(y_1)+y_1 = r_W/2$, which implies $y_1 := \frac{5}{12}r_W < \frac{5}{6}$ (check Figure \ref{fig:figure2}). Thus in particular
$$ \|x_{n+1}^{1/2-s}\widetilde w\|_{L^2(B_{r(y_1)}^+(x_W, y_1))} \leq \|x_{n+1}^{1/2-s}\widetilde w\|_{L^2(B_{r_W/2}^+(x_W))} \leq KE (\eta/E)^\alpha\;. $$

\begin{figure}
\centering
  \captionsetup{width=.45\linewidth}
  \includegraphics[width=.45\linewidth]{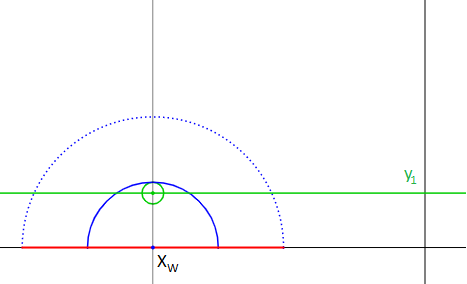}
  \caption{In solid and dashed blue, $B^+_{r_W/2}(x_W,0)$ and $B^+_{r_W}(x_W,0)$. In green, $B^+_{r(y_1)}(x_W,y_1)$. In red, $W$.} 
  \label{fig:figure2}
\end{figure}

Next we shall recursively find the largest height $y_{i+1}$ such that $B^+_{r(y_{i+1})}(x_W,y_{i+1})\subset B^+_{2r(y_{i})}(x_W,y_i)$. 
\begin{figure}[H]
\centering
  \captionsetup{width=.45\linewidth}
  \includegraphics[width=.45\linewidth]{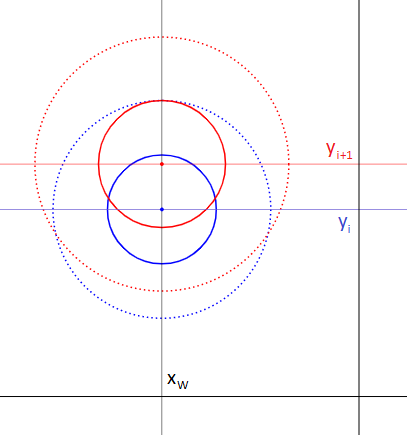}
  \caption{In solid blue and red, $B^+_{r(y_i)}(x_W,y_i)$ and $B^+_{r(y_{i+1})}(x_W,y_{i+1})$. In dashed blue and red, $B^+_{2r(y_i)}(x_W,y_i)$ and $B^+_{2r(y_{i+1})}(x_W,y_{i+1})$.} 
  \label{fig:figure3}
\end{figure}
From this one gets the relation $r(y_{i+1})+y_{i+1}= 2r(y_i)+y_i$, that is $y_{i+1}:= \frac{7}{6}y_i$ (check Figure \ref{fig:figure3}), and by \eqref{bound-bulk}
$$ \|x_{n+1}^{1/2-s}\widetilde w\|_{L^2(B_{r(y_{i+1})}^+(x_W, y_{i+1}))} \leq \|x_{n+1}^{1/2-s}\widetilde w\|_{L^2(B_{2r(y_{i})}^+(x_W, y_{i}))} \leq KE^{1-\alpha} \|x_{n+1}^{1/2-s}\widetilde w\|^\alpha_{L^2(B_{r(y_{i})}^+(x_W, y_{i}))} \;.  $$

Eventually $y_N = (\frac{7}{6})^{N-1} y_1$ for all $N\in\mathbb N$, and
\begin{align*} \|x_{n+1}^{1/2-s}\widetilde w\|_{L^2(B_{r(y_{N})}^+(x_W, y_{N}))} & \leq  (KE^{1-\alpha})^{\sum_{j=0}^{N-2}\alpha^j} \|x_{n+1}^{1/2-s}\widetilde w\|^{\alpha^{N-1}}_{L^2(B_{r(y_{1})}^+(x_W, y_{1}))} \\ & \leq K^{\frac{1-\alpha^{N-1}}{1-\alpha}}E^{1-\alpha^{N-1}} ( KE (\eta/E)^\alpha )^{\alpha^{N-1}} = K^{\frac{1 - \alpha^N}{1-\alpha}}E   (\eta/E)^{\alpha^N} 
\;.  \end{align*}
We require that $y_N = 1$, which gives $N= 1+\frac{\log y_1}{\log (6/7)} \leq 13|\log y_1|\leq 14|\log r_W|$ for $r_W$ small enough.
\vspace{3mm}

Consider the segment of endpoints $(x_W, 1)$ and $(x_\Omega, 1)$, and divide it into segments of length $\frac{1}{5}$ by a finite sequence of points $x_i$. Then of course $B^+_{1/5}(x_{i+1})\subset B^+_{2/5}(x_i)$ for every $i$, and we can apply \eqref{bound-bulk} for $N_1:=\frac{|x_W-x_\Omega|}{r(1)} = 5|x_W-x_\Omega| $ times. We get
\begin{align*} \|x_{n+1}^{1/2-s}\widetilde w\|_{L^2(B_{1/5}^+(x_\Omega,1))} & \leq  (KE^{1-\alpha})^{\sum_{j=0}^{N_1-2}\alpha^j} \|x_{n+1}^{1/2-s}\widetilde w\|^{\alpha^{N_1-1}}_{L^2(B_{1/5}^+(x_W, 1))} \leq  K^{\frac{1 - \alpha^{N+N_1}}{1-\alpha}}E   (\eta/E)^{\alpha^{N+N_1}} 
\;.  \end{align*}
We want to cover $\Omega\times\{1\}$ with balls of the kind $B^+_{1/5}(x,1)$, with $x\in \Omega\times\{1\}$. This can be done by choosing a finite number (depending only on $\Omega$) of "horizontal" directions and producing balls from $B_{1/5}^+(x_\Omega,1)$ as before. This requires at most $N_2:=\frac{r_\Omega}{r(1)}=5r_\Omega$ new steps and $N_\Omega$ new balls. We let their centers be $(x_k,1)$ with $k\in\{1,...,N_\Omega\}$.

Let $z_0:=1$. For each of the balls $B^+_{r(z_0)}(x_k,z_0)$ covering $\Omega\times\{1\}$ we shall identify $2^n$ balls at height $z_1$ and centers $(x_{k,1}, z_1), ..., (x_{k,2^n},z_1)$ such that $$B^+_{r(z_1)}(x_{k,j}, z_1)\subset B^+_{2r(z_0)}(x_k,z_0)\quad \mbox{ for all}\quad  j\in\{1,...,2^n\}$$ and $$ B'_{r(z_0)}(x_k,0)\times [z_1,z_0] \subseteq \bigcup_j B^+_{r(z_1)}(x_{k,j},z_1)\;.$$ 
Here the points $x_{k,j}$ can be taken to be the vertices of the $n$-cube inscribed in $B'_{r(z_0)/2}(x_k,0)$. In order to do so, it suffices to have $r(z_1)^2 \geq (z_0-z_1)^2 + \frac{r^2(z_0)}{4}$ (check Figure \ref{fig:figure4}), and after some computations one sees that this relation is verified e.g. if $z_1 = \frac{9}{10}z_0$. 
\begin{figure}
\centering
  \captionsetup{width=.6\linewidth}
  \includegraphics[width=.45\linewidth]{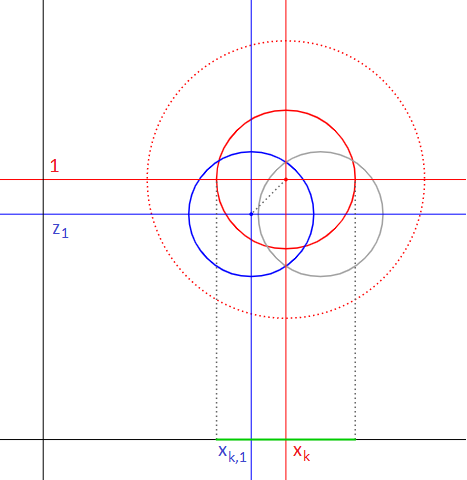}
  \caption{In solid and dashed red, the balls $B^+_{r(z_0)}(x_k,z_0)$ and $B^+_{2r(z_0)}(x_k,z_0)$ respectively (observe that $z_0=1$). In green, $B'_{r(z_0)}(x_k,0)$. In blue, the ball $B^+_{r(z_1)}(x_{k,1},z_1)$ constructed in the first step. In gray, another such ball.} 
  \label{fig:figure4}
\end{figure}

\noindent With a recursion on the last step, we identify a sequence of heights $z_i := (\frac{9}{10})^i z_0 $ and $2^n$ balls at height $z_{i}$ and centers $(x_{k,1}, z_i), ..., (x_{k,2^n},z_i)$ for each of the balls $B^+_{r(z_{i-1})}(x_k,z_{i-1})$ from the previous step such that $$B^+_{r(z_i)}(x_{k,j}, z_i)\subset B^+_{2r(z_{i-1})}(x_k,z_{i-1})\quad \mbox{ for all}\quad  j\in\{1,...,2^n\}$$ and $$ B'_{r(z_{i-1})}(x_k,0)\times [z_i,z_{i-1}] \subseteq \bigcup_j B^+_{r(z_i)}(x_{k,j},z_i)\;.$$ 
We choose $N_3 := \frac{\log h}{\log (9/10)} \leq 10 |\log h|$ in order to have $z_{N_3}=h$, and see that $$\Omega\times[h,1]\subseteq \bigcup_{i,k,j} B^+_{r(z_i)}(x_{k,j}, z_i)\;,$$
where the union has at most $ 2^{n\,N_3} N_\Omega$ elements. Then by setting \begin{align*}N+N_1+N_2+N_3 &\leq 14|\log r_W| + 5|x_W-x_\Omega| + 5r_\Omega + 10|\log h| \\ & \lesssim r_\Omega\,|\log r_W|\,|x_W-x_\Omega|\,|\log h| =: C_{\Omega, W}|\log h|,\end{align*}
which holds for $h$ smaller than a numerical constant $k\in (0,1)$ and a good initial choice of $r_W$, $r_\Omega$ and $x_\Omega$, we obtain
\begin{equation*}\begin{split} \|x_{n+1}^{1/2-s}\widetilde w\|_{L^2(\Omega \times [h,1])} & \leq 2^{n\,N_3}N_\Omega \sup_{i,j,k} \|x_{n+1}^{1/2-s}\widetilde w\|_{L^2(B^+_{r(z_i)}(x_{k,j},z_i))}
\\ & \leq 2^{n\,N_3}N_\Omega K^{\frac{1 - \alpha^{C_{\Omega, W}|\log h|}}{1-\alpha}}E   (\eta/E)^{\alpha^{C_{\Omega, W}|\log h|}} \\ & \leq K^{\frac{1 }{1-\alpha}}E N_\Omega 2^{10 n |\log h|}    (\eta/E)^{\alpha^{C_{\Omega, W}|\log h|}} \leq K^{\frac{1 }{1-\alpha}} N_\Omega E\,h^{-10 n}    (\eta/E)^{h^{C_{\Omega, W}|\log \alpha|}}\;.  \end{split}\end{equation*}
For the term $\|x_{n+1}^{1/2-s}\widetilde w\|_{L^2(\Omega \times [0,h])}$ we estimate as in the proof of Theorem 5.5 in \cite{RS17} and get
\begin{equation}\label{final-estimate}\begin{split} \|x_{n+1}^{1/2-s}\widetilde w\|_{L^2(\Omega \times [0,1])} & \leq \widetilde K E \left(h^{-10 n}    (\eta/E)^{h^{C_{\Omega, W}|\log \alpha|}} + h^\beta \right)\;, \end{split}\end{equation}
where $\beta \in (0,1)$ depends on $n,s$ and $\widetilde K>0$ depends on $n,s,\Omega$. We now optimize estimate \eqref{final-estimate} by choosing $h\in(0,1)$. Assume $\eta/E \leq e^{-1}$, which ensures $z:=|\log (\eta/E)|\geq 1$, and then let $h:= (k^{-D}+z^{1/2})^{-\frac{1}{D}}$, where for simplicity we set $D:=C_{\Omega,W}|\log \alpha|$. Observe that $h<k$, as wanted. After some computations we get 
\begin{align*} h^{-10 n}    (\eta/E)^{h^{C_{\Omega, W}|\log \alpha|}} + h^\beta &= (k^{-D}+z^{1/2})^{\frac{10n}{D}}e^{-\frac{z}{k^{-D}+z^{1/2}}}+ (k^{-D}+z^{1/2})^{-\frac{\beta}{D}} \\ & \leq (k^{-D}+z^{1/2})^{\frac{10n}{D}+m} m^m (ez)^{-m} + z^{-\frac{\beta}{2D}} \\ & \leq e^{-m}m^m (2k^{-D})^{\frac{10n}{D}+m}z^{\frac{10n}{2D}-\frac{m}{2}} + z^{-\frac{\beta}{2D}} \\ & = \left(1+ e^{-m}m^m (2k^{-D})^{\frac{10n}{D}+m} \right) z^{-\frac{\beta}{2D}} \\ &  \leq (1+m^m(2k^{-D})^{2m}) z^{-\frac{\beta}{2D}} = (1+m^m2^{2m}k^{-2(10n+\beta)}) z^{-\frac{\beta}{2D}} \;.\end{align*}
Here we first used the formula $e^{-x}\leq m^m (ex)^{-m}$, which holds for all $x,m > 0$, and then chose $m:= \frac{10n+\beta}{D}$. If now $D$ is large enough, which can always be arranged by choosing $r_W$ small enough with respect to fixed $n,s,\Omega$, we get $m^m2^{2m}\leq 1$. Thus, coming back to the symbols we had in \eqref{final-estimate},
\begin{equation*}
    \|x_{n+1}^{1/2-s}\widetilde w\|_{L^2(\Omega \times [0,1])} \leq c'_{n,s}\widetilde K E |\log (\eta/E)|^{-\frac{\beta}{2C_{\Omega,W}|\log \alpha|}}
\end{equation*}
for $r_W$ small enough with respect to $n,s,\Omega$. 
\vspace{3mm}

Since the same sequence of balls can be used again, for the gradient term $\|x_{n+1}^{1/2-s}\nabla\widetilde w\|_{L^2(\Omega \times [0,1])} $ one gets a similar estimate (check Section 5.4 in \cite{RS17}):
\begin{equation*}
    \|x_{n+1}^{1/2-s}\widetilde \nabla w\|_{L^2(\Omega \times [0,1])} \leq c'_{n,s}\widetilde K' E |\log (\eta/E)|^{-\frac{\beta'}{2C_{\Omega,W}|\log \alpha'|}}\;.
\end{equation*}
Thus eventually
\begin{equation}\label{CS-estimate}
    \|x_{n+1}^{1/2-s}\widetilde w\|_{L^2(\Omega \times [0,1])} + \|x_{n+1}^{1/2-s}\widetilde \nabla w\|_{L^2(\Omega \times [0,1])} \leq c_{n,s,\Omega} E |\log(\eta/E)|^{-\frac{c_{n,s}}{C_{\Omega,W}}}\;.
\end{equation}

 Adapting Lemma 6.1 from \cite{RS17} to our case, we immediately get the estimate $$ \|v\|_{H^{-s}(\Omega)}\leq (1+\|\Psi\|_{B(H^s_{\overline\Omega}, H^{-s}(\Omega))})\|w\|_{H^s_{\overline \Omega}} \leq (1+\|\Psi\|_{B(L^2(\Omega),L^2(\Omega))})\|w\|_{H^s_{\overline \Omega}} \leq (1+M)\|w\|_{H^s_{\overline \Omega}}\;. $$
Moreover, choosing a bounded set $\Omega'$ such that $\Omega\Subset\Omega'$, by Lemma 4.4 in \cite{RS17} we get 
$$ \|w\|_{H^s_{\overline\Omega}} \leq C_{n,s,\Omega,\Omega'} \left( \|x_{n+1}^{1/2-s}\widetilde w\|_{L^2( \Omega'\times[0,1])}+ \|x_{n+1}^{1/2-s}\nabla\widetilde w\|_{L^2(\Omega'\times[0,1])} \right)\;. $$
Using these and \eqref{CS-estimate} (observe that $\Omega' \subset B_{r_\Omega}(x_\Omega)$ for a good initial choice of $r_\Omega, x_\Omega$) we get for $E$ large enough
$$\|v\|_{H^{-s}(\Omega)}\leq c_{n,s,\Omega,\Omega'}(1+M)E |\log (\eta/E)|^{-\frac{c_{n,s}}{C_{\Omega,W}}} \leq c_{n,s,\Omega,\Omega'}(1+M)E |\log (\|(-\Delta)^sw\|_{H^{-s}(W)})|^{-\frac{c_{n,s}}{C_{\Omega,W}}}\;.$$
This is almost the wanted inequality from Theorem \ref{logestimate}. In order to conclude the argument, we just need to observe that even if $W$ is not a ball, we can certainly find a ball $V\subset W$, and perform the estimate with $V$ instead than $W$. This leads to
$$\|v\|_{H^{-s}(\Omega)}\leq c_{n,s,\Omega,\Omega'}(1+M)E|\log(\|(-\Delta)^sw\|_{H^{-s}(W)})|^{-\frac{c_{n,s}}{C_{\Omega,V}}}\;.$$

We can now eventually define the constants $\sigma, c$ corresponding to $\Omega, W$. We identify a suitable small ball $V\subset W$, and let $$\sigma := \frac{c_{n,s}}{C_{\Omega,V}}\;,\quad c:= c_{n,s,\Omega,\Omega'}(1+M)E\;.$$ The dependence of $\sigma, c$ on $M,W$ is now clear. In particular, one sees that $\sigma$ presents the expected vanishing behaviour as the distance between $W$ and $\Omega$ increases.

\bibliography{References-006}

\end{document}